\theoremstyle{theorem}
\newcounter{dummy} \numberwithin{dummy}{section}
\newtheorem{thm}[dummy]{Theorem}
\newtheorem{defn}[dummy]{Definition}
\newtheorem{cor}[dummy]{Corollary}
\newtheorem{prop}[dummy]{Proposition}
\newtheorem{claim}[dummy]{Claim}
\theoremstyle{remark}
\newtheorem{rmk}[dummy]{Remark}
\newtheorem{exam}[dummy]{Example}
\newcommand{\Q}{\mathbb{Q}}
\newcommand{\QQ}{\mathbb{Q}}
\newcommand{\C}{\mathbb{C}}
\newcommand{\Z}{\mathbb{Z}}
\newcommand{\N}{\mathbb{N}}
\newcommand{\R}{\mathbb{R}}
\newcommand{\PP}{\mathbb{P}}
\newcommand{\Pic}{\operatorname{Pic}}
\newcommand{\Br}{\operatorname{Branch}}
\newcommand{\rank}{\operatorname{rank}}
\newcommand{\St}{\operatorname{St}}
\newcommand{\Link}{\operatorname{Link}}
\newcommand{\OO}{\mathcal{O}}
\newcommand{\Diff}{\operatorname{Diff}}
\newcommand{\Addresses}{{
  \bigskip
  \footnotesize
  \noindent \textsc{Department of Mathematics, Imperial College London, 180 Queen’s Gate, London SW72AZ. }\newline
  \noindent\textit{E-mail address}: \texttt{m.mauri15@imperial.ac.uk}
}}
\newcommand{\xdashrightarrow}[2][]{\ext@arrow 0059\rightarrowfill@@{#1}{#2}}
\newcommand{\xdashleftarrow}[2][]{\ext@arrow 3095\leftarrowfill@@{#1}{#2}}
\newcommand{\xdashleftrightarrow}[2][]{\ext@arrow 3359\leftrightarrowfill@@{#1}{#2}}
\def\rightarrowfill@@{\arrowfill@@\relax\relbar\rightarrow}
\def\leftarrowfill@@{\arrowfill@@\leftarrow\relbar\relax}
\def\leftrightarrowfill@@{\arrowfill@@\leftarrow\relbar\rightarrow}
\def\arrowfill@@#1#2#3#4{%
  $\m@th\thickmuskip0mu\medmuskip\thickmuskip\thinmuskip\thickmuskip
   \relax#4#1
   \xleaders\hbox{$#4#2$}\hfill
   #3$%
}
\title{The dual complex of log Calabi--Yau pairs on Mori fibre spaces}
\author{Mirko Mauri}
\begin{document}
\maketitle

\maketitle

\begin{abstract}
In this paper we show that the dual complex of a dlt log Calabi--Yau pair $(Y, \Delta)$ on a Mori fibre space $\pi: Y \to Z$ is a finite quotient of a sphere, provided that either the Picard number of $Y$ or the dimension of $Z$ is $\leq 2$. This is a partial answer to Question 4 in \cite{KollarXu2016}.
\end{abstract}
\tableofcontents
\section{Introduction}
A dual complex is a cell complex, encoding the combinatorial data of how the irreducible components of a simple normal crossing or dlt boundary intersect. These objects have raised the interest of many scholars in different fields. For instance, the homeomorphism type of the dual complex of a minimal dlt modification is an interesting invariant of a singularity; see \cite{deFernexKollarXu2017}. In mirror symmetry, the dual complex of the special fibre of a good minimal dlt degeneration of Calabi--Yau varieties has recently been proved to be the base of a non-archimedean SYZ fibration; see \cite{NicaiseXu2016a} and \cite{NicaiseXuYu2018}. 

In both these examples, a neighbourhood of any cell of the dual complex is a cone over the dual complex of a smaller dimensional dlt pair $(X, \Delta)$, which satisfies the additional property that $K_X+\Delta \sim_\Q 0$, provided that the singularity is log canonical and the degeneration semistable. These pairs are called log Calabi--Yau, in brief logCY. Their dual complexes have been deeply studied in \cite{KollarXu2016}. In that paper, the authors have posed the question whether the dual complex of a logCY pair of dimension $n+1$ is the quotient of a sphere $\mathbb{S}^k$ of dimension $k \leq n$ for some finite group $G \subset \operatorname{O}_{k+1}(\R)$. With the techniques developed, they were able to provide a positive answer in dimension $\leq 4$, and in dimension $= 5$ under the additional hypothesis that $(X, \Delta)$ is simple normal crossing. It is worthy to remark that an affirmative answer to this question would imply, for instance, that the base of a SYZ fibration has the structure of a topological orbifold.

In this paper, we answer positively the question for a special class of dlt logCY pairs $(Y, \Delta)$, endowed with a morphism $\pi\colon Y \to Z$ of relative Picard number one. This hypothesis is inspired by the following observation. If $(X, \Delta_X)$ is a logCY pair, then $K_X$ is not pseudo-effective. 
By \cite{BirkarCasciniHaconEtAl2010}, a $K_X$-MMP with scaling $f \colon X \dashrightarrow Y$ terminates with a Mori fibre space $\pi\colon Y \to Z$ and the pair $(Y, \Delta := f_*\Delta_X)$ is still logCY. 
It sounds sensible to the author to check first whether the dual complexes of these special pairs are finite quotient of spheres, under the dlt assumption.

In this paper, we describe the dual complex of dlt logCY pairs $(Y, \Delta)$ on Mori fibre spaces $\pi: Y \to Z$, subject to the condition that either the Picard number of $Y$ or the dimension of $Z$ is smaller or equal to two, \textit{i.e.} $\rho(Y)\leq 2$ or $\dim Z \leq 2$. The main results are collected in the following statement.
\begin{thm}\label{mainthm}
 Let $(Y, \Delta)$ be a dlt pair such that:
 \begin{enumerate}
 \item $Y$ is a $\Q$-factorial projective variety of dimension $n+1$;
 \item (Mori fibre space) $\pi\colon Y \to Z$ is a Mori fibre space of relative dimension $r \coloneqq \dim Y - \dim Z$;
 \item (logCY) $K_Y+\Delta \sim_\Q 0$.
 \end{enumerate}
 
  If $\rho(Y)=1$, then the dual complex $\mathcal{D}( \Delta)$ is PL-homeomorphic either to a closed ball $\mathbb{B}^{m}$ of dimension $m \leq n$ or to the sphere $\mathbb{S}^n$.
 
 If $\rho(Y)=2$, then $\mathcal{D}( \Delta)$ is PL-homeomorphic either to a closed ball $\mathbb{B}^{m}$ of dimension $m \leq n$ or to a sphere $\mathbb{S}^{m}$ of dimension $m=r-1, n-r$ or $n$.
 
 If $\dim Z =2$, then $\mathcal{D}( \Delta)$ is PL-homeomorphic to a closed ball $\mathbb{B}^{m}$ of dimension $m \leq n$, to a sphere $\mathbb{S}^m$ of dimension $m=1,n-2, n-1$ or $n$, or to the quotient $\PP^2(\R)*\mathbb{S}^{n-3}$. 
  
  All these cases occur. 
\end{thm}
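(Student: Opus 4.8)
The plan is to separate the two sources of strata of $\Delta$: those coming from a general fibre of $\pi$ and those coming from the base. Write $\Delta = \Delta^h + \Delta^v$ for the horizontal and vertical parts. Since $K_Y+\Delta\sim_\Q 0$ and $-K_Y$ is $\pi$-ample, $\Delta$ is $\pi$-ample; hence $\Delta^h\neq 0$, and restricting to a general fibre $F$ the pair $(F,\Delta^h|_F)$ is a dlt logCY pair with $F$ a Fano variety of Picard number one (as $\rho(F)=\rho(Y/Z)=1$). First I would record the general dichotomy for dual complexes of dlt logCY pairs: either $\mathcal{D}(\Delta)$ has dimension $<n$, in which case it is collapsible and hence a PL ball $\mathbb{B}^m$, or it has maximal dimension $n$ (maximal intersection), in which case it is a closed PL pseudomanifold with finite fundamental group and the rational homology of $\mathbb{S}^n$. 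The first alternative already accounts for all the ball cases, so the heart of the matter is to identify the pseudomanifold in the maximal-intersection case.

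When $\rho(Y)=1$ the base $Z$ is a point, so $Y=F$ is itself Fano of Picard number one and every component of $\Delta$ is ample. Here I would show that any two components meet, and that in the maximal-intersection case they meet like the facets of a simplex, so that $\mathcal{D}(\Delta)$ is the boundary of a simplex, i.e. $\mathbb{S}^n$; the key point is that $\Cl(Y)=\Z$ leaves no room for a nontrivial quotient, equivalently $\pi_1(\mathcal{D}(\Delta))=1$. This settles $\rho(Y)=1$ and provides the base of the fibrewise analysis.

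For the remaining cases the plan is to prove a gluing formula expressing $\mathcal{D}(\Delta)$ in terms of the fibre complex $\mathcal{D}(\Delta^h|_F)$ (of dimension $\le r-1$, a ball or the sphere $\mathbb{S}^{r-1}$ by the previous step applied to $F$) and a base complex governed by the canonical bundle formula $K_Y+\Delta\sim_\Q \pi^*(K_Z+\Delta_Z+M_Z)$, which makes $(Z,\Delta_Z,M_Z)$ a generalized logCY pair of dimension $\dim Z=n+1-r$. In the product-like situation the two pieces combine as a join $\mathcal{D}(\Delta^h|_F)*(\text{base complex})$, accounting for the sphere dimensions $r-1$ (fibre only), $n-r=\dim Z-1$ (base only) and $n$ (both), which is exactly the list for $\rho(Y)=2$. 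For $\rho(Y)=2$ the cone of curves has only two extremal rays, and I would use this rigidity to force the monodromy of the fibration to be trivial, so that no quotient occurs and only these spheres appear. For $\dim Z=2$ the base is a surface carrying a generalized logCY structure, and here the monodromy of the components of $\Delta$ around the discriminant of $\pi$ in $Z$ can be a nontrivial $\Z/2$ acting antipodally on a fibre $2$-sphere; the induced quotient of the join is then $\PP^2(\R)*\mathbb{S}^{n-3}=(\mathbb{S}^2/\pm)*\mathbb{S}^{n-3}$, while the trivial-monodromy subcases give the spheres $\mathbb{S}^{1},\mathbb{S}^{n-2},\mathbb{S}^{n-1},\mathbb{S}^{n}$.

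I expect the main obstacle to be the gluing formula itself: dlt structures do not restrict well to fibres or descend to the base, so passing between $\mathcal{D}(\Delta)$, the fibre complex, and the base generalized pair requires choosing compatible dlt modifications and controlling how the strata lying over the discriminant of $\pi$ are glued, and it is precisely in this gluing that the monodromy, hence the finite group realizing any quotient, is detected. Once the formula is available, bounding this group (trivial when $\rho(Y)=2$, at most $\Z/2$ when $\dim Z=2$) is the second delicate point, and relies on the low Picard number or low base dimension to limit the combinatorics of the vertical components. Finally, to see that all listed cases occur I would exhibit explicit models: products and projective bundles over low-dimensional bases realize the balls and the fibre, base, and full spheres, while a suitable conic or quadric bundle over a surface with a double-cover discriminant produces the quotient $\PP^2(\R)*\mathbb{S}^{n-3}$.
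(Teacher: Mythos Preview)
Your high-level picture --- separate horizontal and vertical contributions, assemble them via a join, and detect possible identifications coming from multisections of $\pi$ --- is sound and is indeed the shape of the paper's argument. But there is a concrete error in your foundational step that would make the plan miss several of the listed outcomes.

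You assert a general dichotomy: if $\mathcal{D}(\Delta)$ has dimension $<n$ then it is collapsible, hence a ball. This is false for dlt logCY pairs, and the spheres $\mathbb{S}^{r-1}$, $\mathbb{S}^{n-r}$ (for $\rho(Y)=2$) and $\mathbb{S}^{1},\mathbb{S}^{n-2},\mathbb{S}^{n-1}$ (for $\dim Z=2$) are exactly counterexamples. For instance, take $Y=E\times\PP^{n}$ with $E$ an elliptic curve, $\pi$ the projection to $E$, and $\Delta$ the pullback of $n+1$ general hyperplanes; then $(Y,\Delta)$ is dlt logCY with $\rho(Y)=2$, the minimal strata are copies of $E$ (so \emph{not} maximal intersection), yet $\mathcal{D}(\Delta)\simeq\partial\sigma^{n}\simeq\mathbb{S}^{n-1}$. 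The paper never invokes such a dichotomy; it determines the $\Delta$-complex structure directly, and the intermediate-dimensional spheres arise precisely when only the fibre or only the base contributes (i.e.\ $\mathcal{D}^{\text{hor}}=\emptyset$ or $\mathcal{D}^{\text{vert}}=\emptyset$ in the product-type case). Your plan, by declaring all non-maximal cases to be balls, would never produce these spheres.

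Second, the mechanism you propose for excluding quotients when $\rho(Y)=2$ --- ``rigidity of the two-ray cone forces trivial monodromy'' --- is not how the argument runs, and I do not see how to make it precise. The obstruction in the paper is not monodromy of a local system but the possible existence of a horizontal stratum $W$ of codimension $r$ mapping two-to-one onto $Z$. Ruling this out when $\rho(Z)=1$ requires the Stein factorization $W\to Z^{\nu}\to Z$, repeated use of connectedness theorems on links of cells, and finally the Hodge index theorem on $Z$, to force the induced map $\mathcal{D}(B^{\nu})\to\mathcal{D}(B)$ to be simultaneously a nontrivial double cover and a homeomorphism --- a contradiction. There is also a structural case your outline does not anticipate: vertical lc centres of maximal dimension which are \emph{not} contained in any vertical divisor of $\Delta$ (they occur as codimension-$(r{+}1)$ intersections of all the horizontal divisors). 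These break the naive join description $\mathcal{D}^{\text{vert}}*\mathcal{D}^{\text{hor}}$ and account for a substantial part of the work in both the $\rho(Y)=2$ and $\dim Z=2$ cases.
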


\begin{proof}[Proof of Theorem \ref{mainthm}] 

It is a combination of the following theorems:
\begin{enumerate}
\item Theorem \ref{conjsncFanoPic1} for $\rho(Y)=1$;
\item Theorem \ref{DualComplexlogCYMorifibrespace} for $\rho(Y)=2$ (which includes in particular the case $\dim Z =1$); 
\item Theorem \ref{dualcomplexsurface} for $\dim Z =2$. 
\end{enumerate}

Finally, all the cases can be realized as suitable hyperplane arrangements in $\PP^{n+1}$, $\PP^{n-r+1}\times \PP^{r}$, or $E \times \PP^1 \times \PP^{n-1}$, where $\PP^k$ is the projective space of dimension $k$ and $E$ is an elliptic curve, with the only exception of the PL-homeomorphism type $\PP^2(\R)*\mathbb{S}^{n-3}$, which is discussed in Example \ref{examplePR2Sn-3}; see also Table \ref{table:example}.
\end{proof}

Theorem \ref{mainthm} can be summarised in the following tables:
\begin{table}[h]
   \begin{tabular}{ c c l}
$\rho(Y)$ & $\dim(Z)$ & PL-homeomorphism type of $\mathcal{D}( \Delta)$ \\ \hline
1& & $\mathbb{B}^m, \mathbb{S}^n$\\
2& & $\mathbb{B}^m, \mathbb{S}^{r-1}, \mathbb{S}^{n-r}, \mathbb{S}^n$\\
& 1 & $\mathbb{B}^m, \mathbb{S}^{0}, \mathbb{S}^{n-1}, \mathbb{S}^n$\\
& 2 & $\mathbb{B}^m, \mathbb{S}^{1}, \mathbb{S}^{n-2}, \mathbb{S}^{n-1}, \mathbb{S}^n, \PP^2(\R)*\mathbb{S}^{n-3}$
    \end{tabular}
    \vspace{0.3 cm}
   \caption{Dual complex of logCY pairs on Mori fibre spaces.}
    \end{table}\\
    
    \begin{table}[h]
    \renewcommand{\arraystretch}{1.3}
       \begin{tabular}{ c c c }
        \multicolumn{1}{ }{}{} \\
        & $\mathcal{D}(\Delta)$ & Example of logCY pair $(Y, \Delta)$ with prescribed $\mathcal{D}(\Delta)$ \\ \hline 
         \multirow{2}{*}{$\rho(Y)=1$} & \multirow{2}{*}{$\mathbb{B}^m$} & $(\PP^{n+1}, \sum^{m+1}_{i=1} \Delta_i + \sum^{2(n+1-m)}_{j=1} \frac{1}{2} \Delta_j)$ \\
         && with $\Delta_{\bullet} \in |\mathcal{O}_{\PP^{n+1}}(1)|$ \vspace{0.3cm}\\
        
        $\rho(Y)=1$ & $\mathbb{S}^n$ & $(\PP^{n+1}, \sum^{n+2}_{i=1} \Delta_i)$ with $\Delta_i \in |\mathcal{O}_{\PP^{n+1}}(1)|$\vspace{0.3cm} \\
        
         \multirow{2}{*}{$\rho(Y)=2$} &  \multirow{2}{*}{ $\mathbb{B}^{a+b+1}$} & 
        $(\PP^{n-r+1}, \sum^{a+1}_{i=1} \Delta_i + \sum^{2(n-r+1-a)}_{j=1} \frac{1}{2} \Delta_j) \times (\PP^{r}, \sum^{b+1}_{i=1} \Delta_i$ 
        \\
        && $ + \sum^{2(r-b)}_{j=1} \frac{1}{2} \Delta_j)$ with $\Delta_{\bullet} \in |\mathcal{O}_{\PP}(1)|$\vspace{0.3cm} \\
        
        \multirow{2}{*}{$\rho(Y)=2$} &  \multirow{2}{*}{ $\mathbb{S}^{r-1}$} & $(\PP^{n-r+1}, \Delta') \times (\PP^{r}, \sum^{r+1}_{i=1} \Delta_i)$ \\
        && with $\Delta_i \in |\mathcal{O}_{\PP^r}(1)|$ and $(\PP^{n-r+1}, \Delta')$ any klt logCY pair \vspace{0.3cm}\\
        
         \multirow{2}{*}{$\rho(Y)=2$} &  \multirow{2}{*}{ $\mathbb{S}^{n-r}$} & $(\PP^{n-r+1}, \sum^{n-r+2}_{i=1} \Delta_i) \times (\PP^{r}, \Delta')$ \\
            & & with $\Delta_i \in |\mathcal{O}_{\PP^{n-r+1}}(1)|$ and $(\PP^{r}, \Delta')$ any klt logCY pair \vspace{0.3cm}\\
        
         $\rho(Y)=2$ & $\mathbb{S}^n$ & $(\PP^{n-r+1}, \sum^{n-r+2}_{i=1} \Delta_i) \times (\PP^{r}, \sum^{r+1}_{j=1} \Delta_j)$ with $\Delta_i \in |\mathcal{O}_{\PP}(1)|$\vspace{0.3cm}\\
         
          \multirow{2}{*}{$\dim Z=1$} & $\mathbb{B}^m, \mathbb{S}^{0},$ & \multirow{2}{*}{as in case $\rho(Y)=2$, when $r=n$} \\
          & $ \mathbb{S}^{n-1}, \mathbb{S}^n$ &  \vspace{0.3cm}\\
          \multirow{2}{*}{$\dim Z=2$} & $\mathbb{B}^m, \mathbb{S}^{1},$ & \multirow{2}{*}{as in case $\rho(Y)=2$, when $r={n-1}$} \\
                & $ \mathbb{S}^{n-2}, \mathbb{S}^{n}$ &  \vspace{0.3cm}\\
                
            \multirow{2}{*}{$\dim Z=2$} &  \multirow{2}{*}{$\mathbb{S}^{n-1}$} & $ E \times (\PP^1, \Delta_0 + \Delta_1) \times (\PP^{n-1}, \sum^{n}_{j=1}\Delta_j)$  \\
            && with $\Delta_{\bullet} \in |\mathcal{O}_{\PP^{n+1}}(1)|$ \vspace{0.3cm}\\
           
           $\dim Z=2$ & $\PP^2(\R)*\mathbb{S}^{n-3}$ & Example \ref{examplePR2Sn-3}. \vspace{0.3cm}
        
        \end{tabular}
        \vspace{0.3 cm}
        \caption{\label{table:example} Examples of logCY pairs $(Y, \Delta)$ which admits a structure of Mori fibre space (in all cases just a projection from a product) and such that the PL-homemorphism type of the dual complex $\mathcal{D}(\Delta)$ is prescribed. The notation $\Delta_i \in |\mathcal{O}_{\PP^{n+1}}(1)|$ indicates that the divisor $\Delta_i$ is a general element of the linear system $|\mathcal{O}_{\PP^{n+1}}(1)|$. Note that all the PL-homemorphism types listed in Table 1 occur.}
        \end{table}        
In particular observe that all these dual complexes are quotients of spheres, compatibly with the prediction \cite[Question 4]{KollarXu2016}.
   
The main ingredients of the proof of Theorem \ref{mainthm} are various connectivity theorems. The first of them is the Hodge index theorem: ample divisors always intersect, provided that they have dimension at least one. This fact allows in \S \ref{Dual complex of logCY pairs with Picard number $1$} to list all the triangulations of $\mathcal{D}( \Delta)$ under the assumption $\rho(Y)=1$. The naive idea for the next step, namely the case of $\rho(Y)=2$, would be to build $\mathcal{D}( \Delta)$ out of the contribution of vertical divisors together with the information provided by horizontal divisors. Indeed, the pushforward of the former determines a logCY pair $(Z, B)$ of Picard number one, while the latter cut out a logCY pair $(F_{\text{gen}}, \Delta_{\text{gen}})$ on a general fibre $F_{\text{gen}}$, which in turns behaves like a logCY pair of Picard number one. The special pairs for which this program works are here called \textit{of combinatorial product type}. The proof of Theorem \ref{mainthm} consists precisely in understanding how far the general pair $(Y, \Delta)$ is from this ideal arrangement. 

As a measure of what can go wrong, observe that there could be strata of $(Y, \Delta)$ which do not dominate $Z$, but which are not contained in any vertical divisor of $\Delta$. This instance is analysed in \S \ref{Reduction of vertical strata of maximal dimension to vertical divisors}. Another issue is represented by horizontal strata which map two-to-one to $Z$: Section \S \ref{Vertical strata of maximal dimension are vertical divisors} accounts for them. These latter strata are also responsible for the occurrence of the homeomorphism type $\PP^2(\R)*\mathbb{S}^{n-3}$, as explained in \S \ref{dimension2case}. We point out that the proof of Theorem \ref{mainthm} highly relies on the connectivity theorems \cite[Proposition 4.37]{Kollar2013a} and \cite[Theorem 4.40]{Kollar2013a} and the canonical bundle formula \cite[Theorem 8.5.1]{Kollar2007}. Observe finally that for a statement which does not involve non-trivial quotients of spheres, our hypothesis on the Picard number is the sharpest possible. Indeed, in Example \ref{examplePR2Sn-3} we construct logCY pairs on Mori fibre spaces of Picard rank three such that $\mathcal{D}( \Delta)\simeq \PP^2(\R)*\mathbb{S}^{n-3}$.

\subsection*{Acknowledgement} I would like to thank Fabio Bernasconi, Paolo Cascini, Tommaso de Fernex, Stefano Filipazzi and Roberto Svaldi for useful discussions. In particular, I would like to express my gratitude to my advisor Paolo Cascini and Fabio Bernasconi for reading an early draft of the paper and correcting some mistakes and imprecisions.  I am also grateful to the anonymous referee for many useful suggestions and corrections.

This work was supported by the Engineering and Physical Sciences Research  Council  [EP/L015234/1],  The  EPSRC  Centre  for  Doctoral  Training  in Geometry  and  Number  Theory  (The  London  School  of  Geometry  and  Number Theory), University College London and Imperial College, London.

\section{Notation: birational dictionary}\label{birationaldictionary}
\subsection{} We work over an algebraically closed field in characteristic zero. A log pair $(Y, \Delta)$ is the datum of a normal variety $Y$ and a $\Q$-divisor $\Delta$ such that $K_Y + \Delta$ is $\Q$-Cartier. If all coefficients of $\Delta$ are in $(0,1]$ (resp. $(-\infty, 1]$), we say that $\Delta$ is a boundary (resp. a sub-boundary). Its support  is the union of the prime divisors with non-zero coefficient in $\Delta$. If $\Delta$ is a sub-boundary, then 
$\Delta=\Delta^{=1} + \Delta^{<1}$,
where $\Delta^{=1}$ and $\Delta^{<1}$ are the sums of the irreducible divisors which appear in $\Delta$ with coefficient equal to one or smaller than one respectively.
 
A $\Q$-divisor is \textbf{$\Q$-Cartier} if one of its multiples is Cartier. A normal variety $Y$ is \textbf{$\Q$-factorial} if any Weil divisor on $Y$ is $\Q$-Cartier.

Let $f\colon X \rightarrow Y$ be a birational morphism. Given a log pair $(Y,\Delta)$, its \textbf{log pull-back} via $f$ is the log pair $(X, \Delta_X)$ determined by the relations
\[K_X + \Delta_X \sim_{\Q} f^*(K_Y+\Delta) \qquad f_*\Delta_X=\Delta.\]
The negative of the coefficient of a prime divisor $E$ in $\Delta_X$, labelled $a(E, Y, \Delta)$, is its \textbf{discrepancy}. 

\subsection{} A log pair $(Y, \Delta)$ is \textbf{log canonical}, abbreviated lc, if $a(E, Y, \Delta)\geq -1$ for any $f\colon X \rightarrow Y$ birational morphism and for any divisor $E \subset X$. An irreducible subvariety $W \subset Y$ is a \textbf{lc centre} if there exists a birational morphism $f\colon X \rightarrow Y$ and a divisor $E \subset X$, called \textbf{lc place}, whose discrepancy $a(E, Y, \Delta)$ equals $-1$ and whose image coincides with $W$.

A log pair $(Y, \Delta)$ is \textbf{log smooth} or \textbf{simple normal crossing}, abbreviated snc, if $Y$ is a smooth variety and the support of $\Delta$ has simple normal crossings. Given any log pair, there is a largest open subset $Y^{\text{snc}}\subset Y$, called simple normal crossing locus, such that $(Y^{\text{snc}}, \Delta|_{Y^{\text{snc}}})$ is snc.

A log canonical pair $(Y, \Delta)$ is dlt, alias \textbf{divisorial log terminal}, if none of the lc centres is contained in $Y \setminus Y^{\text{snc}}$. 
A log canonical pair $(Y, \Delta)$ with no lc centre is klt, alias \textbf{Kawamata log terminal}. 
A log canonical pair $(Y, \Delta)$ is qdlt, alias \textbf{quotient divisorial log terminal}, if for any lc centre $W$ of codimension $d$ there are $\Q$-Cartier divisors $\Delta_1, \ldots, \Delta_d \in \Delta^{=1}$ containing $W$; see also \cite[Proposition 34]{deFernexKollarXu2017}. 

The lc centres of a (q)dlt pair $(Y, \Delta)$ are the connected components of the intersection of the irreducible divisors in the support of $\Delta^{=1}$; see \cite[Theorem 4.16]{Kollar2013a}. Equivalently, we call the lc centres of the (q)dlt pair \textbf{strata} of $\Delta$.

  If $(Y, \Delta:=\sum_{i}\Delta_i)$ is a dlt pair, then for every lc centre $W$ there exists a unique $\Q$-divisor $\Diff^*_W(\Delta)$ on $W$, called \textbf{different}, with the following property: for $m \in \N$ divisible enough, the Poincar\'{e} residue map $
      \omega^{[m]}_Y(m \Delta)|_W \simeq \omega^{[m]}_W$ on the snc locus extends to the isomorphism $\omega^{[m]}_Y(m \Delta)|_W \simeq \omega^{[m]}_W(\Diff^*_W(\Delta))$ on the locus where $\omega^{[m]}_Y(m \Delta)|_W$ and $\omega^{[m]}_W$ are locally free; see \cite[\S 4.18]{Kollar2013a}. In particular, this yields
      $$(K_Y + \Delta)|_W \sim_{\Q} K_W + \Diff^*_W(\Delta),$$ and by adjunction we have that 
    \begin{equation*}\label{different} \Diff^*_W(\Delta)=\sum_{W \nsubseteq D_i }D_i|_W + \Diff^*_W(\Delta)^{<1}.
    \end{equation*}
  
  \subsection{} A lc pair $(Y, \Delta)$ is logCY, alias \textbf{log Calabi--Yau}, if $K_Y+\Delta \sim_\Q 0$. In particular, let $f \colon X \to Y$ be a \textbf{quasi-\'{e}tale} map, \emph{i.e.} a finite map which is \'{e}tale away from a codimension $\geq 2$ subset. Then, the pair $(X, \Delta_X := f^*\Delta)$ is logCY if and only if the pair $(Y, \Delta)$ is logCY; see also \cite[Proposition 5.20]{KollarMori1998}.

\subsection{} A \textbf{Mori fibre space} for the log pair $(Y, \Delta)$ is an algebraic fibre space $\pi\colon Y \to Z$, i.e. a surjective projective morphism with connected fibres, satisfying the following properties: 
\begin{enumerate}
\item $\dim Z < \dim Y$;
\item the relative Picard number $\rho(Y/Z)$ is one;
\item the $\Q$-divisor $-(K_Y+\Delta)$ is $\pi$-ample.
\end{enumerate}
If no log pair is mentioned, we tacitly assume $\Delta=0$.

\begin{prop}\label{qdltpair}\emph{\cite[Proposition 5.5]{HogadiXu009}, \cite[Proposition 40]{deFernexKollarXu2017}}.
Let $(Y, \Delta)$ be a $\Q$-factorial (q)dlt pair. If $\pi\colon Y \to Z$ is a Mori fibre space for the log pair $(Y, \Delta)$ and the support of $\Delta$ does not dominate $Z$, then the pair $(Z, \pi(\Delta))$ is qdlt. In particular, $Z$ is klt.
\end{prop}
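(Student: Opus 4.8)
The plan is to argue locally on $Z$, since both ``qdlt'' and ``klt'' are local conditions; so I may shrink $Z$ around a point and replace $Y$ by its preimage. The first step is to exploit the hypothesis that $\supp \Delta$ does not dominate $Z$. Every component $E$ of $\supp\Delta$ is then $\pi$-vertical, so $E \cdot C = 0$ for a curve $C$ contained in a general fibre, whence $E \equiv_Z 0$. Because $\pi$ is an extremal contraction with $\rho(Y/Z)=1$ and $Y$ is $\Q$-factorial, a divisor numerically trivial over $Z$ is the pullback of a $\Q$-Cartier $\Q$-divisor from $Z$; thus $E \sim_\Q \pi^* E_Z$ for some $\Q$-Cartier $E_Z$. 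Two consequences will be used repeatedly: first, each prime divisor $B=\pi(E)\subset Z$ is $\Q$-Cartier (this is essential, as $Z$ itself need not be $\Q$-factorial, yet qdlt on $Z$ requires the boundary components to be $\Q$-Cartier); second, the general fibre $F$ is Fano, since $-(K_Y+\Delta)|_F=-K_F$ is ample and $\Delta|_F=0$.

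Next I would construct $\pi_*\Delta$ and establish that $(Z,\pi_*\Delta)$ is lc. Working over the generic point $\eta_B$ of a prime divisor $B\subset Z$, i.e. over the DVR $\OO_{Z,\eta_B}$, the coefficient of $B$ in the discriminant $\pi_*\Delta$ is read off from the multiplicities with which the components of the fibre $\pi^{-1}(\eta_B)$ occur in $\Delta$ and in $\pi^*B$, a fibrewise log-canonical-threshold computation in the spirit of the canonical bundle formula \cite[Theorem 8.5.1]{Kollar2007}. The relation $E\sim_\Q\pi^*E_Z$ from the first step pins these coefficients down and forces them into $(0,1]$, and the same computation yields log canonicity of $(Z,\pi_*\Delta)$. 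Here $\rho(Y/Z)=1$ is what keeps the fibre combinatorics under control and the moduli contribution inert.

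The heart of the argument is then to upgrade ``lc'' to ``qdlt'' using the criterion \cite[Proposition 34]{deFernexKollarXu2017}: it suffices to show that every lc centre $W\subset Z$ of codimension $d$ is contained in $d$ $\Q$-Cartier components of $(\pi_*\Delta)^{=1}$. Since the strata of the qdlt pair $(Y,\Delta)$ are the connected components of intersections of the divisors in $\supp\Delta^{=1}$ \cite[Theorem 4.16]{Kollar2013a}, I would match the lc centres of $(Z,\pi_*\Delta)$ with images of $\pi$-vertical strata of $(Y,\Delta)$. Given $W$, choose a vertical stratum $V$ with $\pi(V)=W$; the qdlt hypothesis upstairs supplies $\codim_Y V$ components of $\Delta^{=1}$ through $V$, and---after discarding the ``fibre-direction'' redundancy measured by $\dim V-\dim W$---the vertical ones descend, via the relation $E\sim_\Q\pi^*E_Z$ above, to the required number of distinct $\Q$-Cartier boundary divisors of $\pi_*\Delta$ through $W$. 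Counting them to be exactly $d=\codim_Z W$ gives qdlt, and since the underlying variety of a qdlt pair with effective boundary is klt, the final assertion that $Z$ is klt follows.

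The step I expect to be the main obstacle is the fibrewise analysis over codimension-one points of $Z$: the fibres $\pi^{-1}(B)$ may be reducible or non-reduced (as already for conic bundles over the discriminant), so their components can be permuted or carry multiplicities. Controlling this is exactly what makes the conclusion \emph{quotient} dlt rather than dlt, and it is precisely here that $\rho(Y/Z)=1$ is indispensable: it forces every vertical divisor to be a pullback and bounds the number of fibre components, so that the only monodromy that survives is the mild finite (e.g.\ two-to-one) behaviour of boundary components over $Z$. Verifying that this behaviour produces exactly the quotient-of-snc local models required by qdlt, with coefficients and stratum counts matching on the nose, is the technical crux; the rest is bookkeeping.
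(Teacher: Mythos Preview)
The paper does not prove this proposition; it is quoted from \cite[Proposition 5.5]{HogadiXu009} and \cite[Proposition 40]{deFernexKollarXu2017} and invoked as a black box. There is therefore no in-paper argument to compare your sketch against.

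On its own merits your outline is reasonable and captures the shape of the cited arguments. One correction: the ``fibre-direction redundancy measured by $\dim V-\dim W$'' is a red herring in this setting, because by hypothesis \emph{every} component of $\Delta$ is vertical. Hence all $\codim_Y V$ components of $\Delta^{=1}$ through a stratum $V$ are vertical and descend to \emph{distinct} prime divisors on $Z$ (distinctness uses that $\pi^{-1}$ of an irreducible divisor is irreducible when $\rho(Y/Z)=1$, as recorded in \S\ref{generalitiesdualcomplexpicard2}); moreover $V$ is then a component of $\pi^{-1}\big(\bigcap B_i\big)$, which forces $\dim V-\dim\pi(V)=r$, so the codimensions already match without any discarding. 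The genuine issue, which you rightly flag as the crux, is the converse direction: showing that every lc centre $W$ of $(Z,\pi_*\Delta)$ is the image of a stratum of $(Y,\Delta)$. This, together with pinning down the precise meaning of $\pi_*\Delta$ (the paper's notation is informal; your discriminant interpretation via the canonical bundle formula is the correct one), is exactly what the references establish by a local computation over codimension-one points of $Z$.
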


\section{Notation: simplicial complexes}\label{sectionsimplicialcomplex}
 
\subsection{} We establish the notation:
 \begin{itemize}
  \item $\mathbb{S}^n:=\{(x_0, \ldots, x_n)\in \R^{n+1}|\, \sum_i x^2_i=1\}$ is the sphere of dimension $n$;
  \item $\mathbb{B}^n:=\{(x_1, \ldots, x_n)\in \R^{n}|\, \sum_i x^2_i\leq 1\}$ is the (closed) ball of dimension $n$;  
  \item $\PP^n(\R)$ is the real projective space of dimension $n$;
  \item $\sigma^n:=\{(x_0, \ldots, x_n)\in \R^{n+1}|\, \sum_i x_i=1,\, x_i\geq 0\}$ is the (standard) \textbf{simplex} of dimension $n$;
  \item $\partial \sigma^n:=\{(x_0, \ldots, x_n)\in \R^{n+1}|\, \sum_i x_i=1, \prod_i x_i=0,\, x_i\geq 0\}$ is the boundary of $\sigma^n$;
  \item the \textbf{join} $X*Y$ of two topological spaces $X$ and $Y$ is the quotient space of $X \times Y \times [0,1]$  under the identifications $(x,y_1, 0) \sim (x, y_2, 0)$, for all $x \in X$ and $y_1,y_2 \in Y$, and $(x_1,y, 1) \sim (x_2, y, 1)$, for all $x_1,x_2 \in X$ and $y \in Y$;
  \item the \textbf{suspension} $\Sigma X$ of a topological space $X$ is the join $X*\mathbb{S}^0$.
 \end{itemize} 
\begin{defn} \emph{\cite[\S 2.1]{Hatcher2002}}
\begin{itemize}
\item A \textbf{$\Delta$-complex} is the datum of a topological space $\mathcal{D}$ and a triangulation, \emph{i.e.} a collection of characteristic maps $\alpha_j\colon \sigma^d \to \mathcal{D}$, with $d:=d(j)$ depending on $j$, such that:
\begin{enumerate}[label=(\roman*)]
\item the restriction of $\alpha_j$ to the interior $\mathring{\sigma}^d$ of $\sigma^d$ is injective and any point of $\mathcal{D}$ is contained in $\alpha_j(\mathring{\sigma}^d)$, called $d$-dimensional \textbf{cell} or \textbf{face}, for a suitable choice of $j$;
\item for each $(d-1)$-dimensional face of $\partial \sigma^d$, the restriction of the characteristic maps $\alpha_j$ to that face is a characteristic map $\alpha_{j'}\colon \sigma^{d-1} \to \mathcal{D}$;
\item the topology of $\mathcal{D}$ is the coarsest which makes the maps $\alpha_j$ continuous.
\end{enumerate}
\item A \textbf{regular} $\Delta$-complex is a $\Delta$-complex such that in addition the characteristic maps $\alpha_j$ are embeddings.
\item A \textbf{simplicial} complex is a regular $\Delta$-complex such that any two $k$-cells have at most a $(k-1)$-cell in common.
\end{itemize}
\end{defn}
\begin{center}
\tikzstyle{vertex}=[circle, draw, fill=black!50,inner sep=0pt, minimum width=2.5pt]
\begin{tikzpicture}[thick, scale = 1.2]%
\draw (-3,0) circle (0.5 cm);
\draw (-3.5,0) node[vertex]{};
node[vertex]{};\node[align=left, below] at (-3,-.6)%
{non-regular\\ $\Delta$-complex};
\draw (0,0) circle (0.5 cm);
\draw (-0.5,0) node[vertex]{};
\draw (0.5,0) node[vertex]{};
\node[align=center, below] at (0,-.6)%
{regular non-simplicial\\ $\Delta$-complex};
\draw (3,0) circle (0.5 cm);
\draw (2.5,0) node[vertex]{};
\draw (3.25, 0.866025405/2) node[vertex]{};
\draw (3.25, -0.866025405/2)node[vertex]{}; 
\node[align=right, below] at (3,-.6)%
{simplicial\\ $\Delta$-complex};
\end{tikzpicture}
\end{center}
The \textbf{i-th skeleton} of $\mathcal{D}$ is the subcomplex of $\mathcal{D}$ given by the union of all the cells with dimension smaller or equal to $i$. The \textbf{attaching map} of a cell is the restriction $a_j|_{\partial \sigma^{d}}\colon \partial \sigma^{d} \to \mathcal{D}$ of its characteristic map. 

Note that a $\Delta$-complex is prescribed both by the set of its cells and their attaching maps. However, in order to define a regular $\Delta$-complex the datum of the attaching maps is redundant: it is enough to provide the poset of its cells. 

Let $\mathcal{D}$ be a regular $\Delta$-complex. If $v \subset \mathcal{D}$ is a cell of $\mathcal{D}$ we define
\begin{itemize}
\item the \textbf{(open) star} of $v$, denoted $\St(v)$, as the union of the interiors of the cells whose closure intersects $v$; 
\item the \textbf{closed star} of $v$, denoted $\overline{\St(v)}$, is the closure of $\St(v)$;
\item the \textbf{link} of $v$, denoted $\Link(v)$, is the difference $\overline{\St(v)}\setminus \St(v)$ (in the first barycentrical subdivision of $\mathcal{D}$, if $\mathcal{D}$ is not simplicial).
\end{itemize}
\begin{center}
\tikzstyle{vertex}=[circle, draw, fill=black!50,inner sep=0pt, minimum width=2.5pt]
\begin{tikzpicture}[thick, scale = 1.2]%
\fill[fill=gray!20] (-3+1.2*0.5, -1.2*0.28867513459)--(-3-1.2*0.5, -1.2*0.28867513459)--(-3, 1.2*0.577350268);
\draw{
(-3,0) node{}-- (-3+1.2*0.5, -1.2*0.28867513459)
(-3,0) node{}-- (-3-1.2*0.5, -1.2*0.28867513459)
(-3,0) node{}-- (-3, +1.2*0.577350268)
(-3,0) node[vertex, label=below: v]{}
};

\fill[fill=gray!20] (1.2*0.5, -1.2*0.28867513459)--(-1.2*0.5, -1.2*0.28867513459)--(0, 1.2*0.577350268);
\draw{
(0,0) node{}-- (1.2*0.5, -1.2*0.28867513459)
(0,0) node{}-- (-1.2*0.5, -1.2*0.28867513459)
(0,0) node{}-- (0, 1.2*0.577350268)
(1.2*0.5, -1.2*0.28867513459) node{}--(-1.2*0.5, -1.2*0.28867513459)
(0, 1.2*0.577350268) node{}--(-1.2*0.5, -1.2*0.28867513459)
(0, 1.2*0.577350268) node{}--(+1.2*0.5, -1.2*0.28867513459)
(0,0) node[vertex, label=below: v]{}
};

\draw{
(3+1.2*0.5, -1.2*0.28867513459) node{}--(3-1.2*0.5, -1.2*0.28867513459)
(3, 1.2*0.577350268) node{}--(3-1.2*0.5, -1.2*0.28867513459)
(3, 1.2*0.577350268) node{}--(3+1.2*0.5, -1.2*0.28867513459)
(3,0) node[vertex, label=below: v]{}
};
\node[align=left, below] at (-3,-.6)%
{open star of $v$};
\node[align=center, below] at (0,-.6)%
{closed star of $v$};
\node[align=right, below] at (3,-.6)%
{link of $v$};
\end{tikzpicture}
\end{center}

A map between $\Delta$-complexes is \textbf{piecewise-linear}, abbreviated PL, if the restriction of the map to any face is a linear application onto a face of the target, up to refinements of the triangulations.

\subsection{}\label{definition dual complex} The \textbf{dual complex} of a reduced snc pair $(Y, \Delta)$, denoted $\mathcal{D}( \Delta)$, is the regular $\Delta$-complex whose vertices are in correspondence with the irreducible components of $\Delta$ and whose $d$-faces $v_W$ correspond to a lc centre $W$ of codimension $d+1$.

The dual complex of the dlt pair $(Y, \Delta)$ is the dual complex of the snc pair $(Y^{\text{snc}}, \Delta|^{=1}_{Y^{\text{snc}}})$; see \cite[\S 2]{deFernexKollarXu2017}. Hence, $\mathcal{D}(\Delta)=\mathcal{D}(\Delta^{=1})$ by definition. We define the dual complex of a log canonical pair $(Y, \Delta)$ as the PL-homeomorphism type of the dual complex of a dlt modification of $(Y, \Delta)$; see \cite[Theorem 1.34]{Kollar2013a}.
If $(Y, \Delta)$ is qdlt, the same construction performed for snc or dlt pairs gives a regular $\Delta$-complex which coincides with the one provided by a dlt modification; see \cite[Corollary 38]{deFernexKollarXu2017}.

Let $v_W$ be a $d$-face of $\mathcal{D}( \Delta)$ associated to the lc centre $W$. Up to barycentrical subdivisions, $\Link(v_W)$ can be identified with the dual complex of the trace of $\Delta$ on $W$, \emph{i.e.} $\sum_{W \nsubseteq D_i }D_i|_W$. In symbols,
\begin{equation}\label{linkdifferent}
  \Link(v_W)\simeq_{\text{PL}} \mathcal{D}( \Diff^*_W(\Delta))\simeq \mathcal{D}\big(\sum_{W \nsubseteq D_i }D_i|_Y\big).
\end{equation}
\begin{defn}
The dlt pair $(Y, \Delta)$ of dimension $n+1$ has \textbf{maximal intersection} if it admits a 0-dimensional lc centre $W$. Equivalently, the corresponding face $v_W$ has dimension $n$ and we say that $\mathcal{D}( \Delta)$ has \textbf{maximal dimension}.
\end{defn}

If the projective dlt pair $(Y, \Delta)$ has maximal intersection, then $Y$ is rationally connected and $H^i(Y, \mathcal{O}_Y)=0$ for any $i>0$; see \cite[Proposition 19]{KollarXu2016}.
   
\section{Dual complex of logCY pairs with Picard number 1}\label{Dual complex of logCY pairs with Picard number $1$}
 
 In this section we describe the explicit structure of $\Delta$-complex of the dual complex of a qdlt logCY pair with Picard number one.
 
 \begin{thm}\label{conjsncFanoPic1}
 Let $(Y, \Delta)$ be a qdlt pair such that:
 \begin{enumerate}[label=(\roman*)]
 \item $Y$ is a 
 projective variety of dimension $n+1$; 
 \item each
 irreducible component of $\Delta^{=1}$ is ample (e.g.  $\rho(Y)=1$);
 \item (logCY) $K_Y+\Delta \sim_\Q 0$.
 \end{enumerate}
 
 Then $\mathcal{D}( \Delta)$ is PL-homeomorphic either to a ball $\mathbb{B}^{m}$ of dimension $m \leq n$ or to the sphere  $\mathbb{S}^n$.    
 
 More precisely, $\mathcal{D}( \Delta)$ is isomorphic to one of the following regular $\Delta$-complexes:
 \begin{enumerate}
 \item\label{simplex} (standard simplex) standard simplex $\sigma^{m}$ of dimension $m\leq n$;
 \item\label{simplicialsphere} (simplicial $n$-sphere) boundary $\partial \sigma^{n+1}$ of the standard simplex $\sigma^{n+1}$;
 \item\label{regularsphere} (non-simplicial $n$-sphere) union of two standard simplexes $\sigma^{n}$, glued along the boundary.
 \end{enumerate} 
 \end{thm}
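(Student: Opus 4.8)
The plan is to reduce the entire statement to a combinatorial analysis of the finitely many components of $\Delta^{=1}$, exploiting the two features of the hypothesis separately: the positivity coming from $\rho(Y)=1$ and the adjunction coming from $K_Y+\Delta\sim_\Q 0$. Write $\Delta^{=1}=\sum_{i=1}^N D_i$. Since $\rho(Y)=1$ and each $D_i$ is a nonzero effective divisor, its numerical class is a positive multiple of the ample generator of $N^1(Y)_\R\cong\R$, so every $D_i$ is ample. Because $(Y,\Delta)$ is qdlt, the strata are exactly the connected components of the intersections $\bigcap_{i\in S}D_i$ for $S\subseteq\{1,\dots,N\}$, and a $0$-dimensional stratum lies on exactly $n+1$ of the $D_i$. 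Thus the first task is to record which subsets $S$ produce strata and with what multiplicity, since this determines the poset of cells of $\mathcal{D}(\Delta)$.

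First I would pin down the skeleton in dimensions $\le n-1$. Here the Hodge-index/Lefschetz connectivity enters: for $|S|\le n$ the intersection $\bigcap_{i\in S}D_i$ is a nonempty connected subvariety of dimension $\ge (n+1)-|S|\ge 1$ (positive-dimensional intersections of ample divisors are connected, cf.\ the connectivity theorems \cite[Proposition 4.37]{Kollar2013a}, \cite[Theorem 4.40]{Kollar2013a}), hence a single stratum and a single $(|S|-1)$-cell. Consequently every subset of size $\le n$ is a face occurring exactly once, so the $\le(n-1)$-skeleton of $\mathcal{D}(\Delta)$ is precisely the $(n-1)$-skeleton of the full simplex $\sigma^{N-1}$; in particular $\mathcal{D}(\Delta)$ is simplicial below the top dimension. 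For $|S|=n+1$ the same positivity gives that $\bigcap_{i\in S}D_i$ is nonempty but $0$-dimensional, a finite set of points, each a top ($n$-)cell, whereas $|S|=n+2$ forces an empty intersection for dimension reasons. Hence the only freedom left is the number of $n$-cells lying over each $(n+1)$-subset, together with whether $N\le n$, $N=n+1$, or $N=n+2$.

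The decisive step is to control these top cells through the link identification $\Link(v_W)\simeq\mathcal{D}(\Diff^*_W(\Delta))$. Taking $W$ to be the curve cut out by an $n$-subset $S$, adjunction turns $K_Y+\Delta\sim_\Q 0$ into $K_W+\Diff^*_W(\Delta)\sim_\Q 0$, a logCY pair on the normal, hence smooth, curve $W$; the degree identity $\deg\Diff^*_W(\Delta)=2-2g(W)$ forces at most two coefficient-one points, so $\Link(v_W)$ is $\emptyset$, a point, or $\mathbb{S}^0$. This is exactly the pseudomanifold bound: every $(n-1)$-cell lies in at most two $n$-cells. Feeding it back, if $N\ge n+3$ then such a curve $W$ would meet at least three of the remaining ample divisors, in distinct strata (distinctness because a $0$-dimensional stratum sits on exactly $n+1$ of the $D_i$), producing at least three vertices in $\Link(v_W)$ and contradicting the bound; hence $N\le n+2$. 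The case analysis then closes: for $N\le n+1$ with a single top point one obtains the simplex $\sigma^m$ with $m\le n$, giving case \ref{simplex}; for $N=n+1$ with two points one obtains two copies of $\sigma^n$ glued along $\partial\sigma^n$, giving case \ref{regularsphere}; and for $N=n+2$, each of the $n+2$ subsets of size $n+1$ carries exactly one point (both neighbouring divisors contribute nonemptily by ampleness, and together at most two by the link), yielding $\partial\sigma^{n+1}$, case \ref{simplicialsphere}. I expect the main obstacle to be precisely this top-dimensional bookkeeping — establishing the pseudomanifold property and the bound $N\le n+2$ — since it requires combining the curve-adjunction link computation with the connectivity and nonemptiness inputs and ruling out an uncontrolled collapse of several $n$-cells onto a common $(n-1)$-face. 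The final identification of $\sigma^m$, $\partial\sigma^{n+1}$, and the two glued $\sigma^n$ with $\mathbb{B}^m$, $\mathbb{S}^n$, and $\mathbb{S}^n$ respectively is then routine PL topology.
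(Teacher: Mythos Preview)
Your argument is correct and takes a genuinely different route from the paper's. The paper proceeds homologically: after identifying the $(n-1)$-skeleton with that of $\sigma^{m}$ via the Hodge index theorem, it computes $H_i(\mathcal{D}(\Delta))$ through long exact sequences of the pairs $(D_i,\sigma^m_i)$, and in the case $H_n\neq 0$ runs an induction on dimension to show that every link is a sphere, hence $\mathcal{D}(\Delta)$ is a closed manifold with $H_n\simeq\Z$, from which the two sphere triangulations are read off. You bypass all of this by going straight to the bottom of that induction: adjunction on a $1$-dimensional stratum $W$ gives $\deg\Diff^*_W(\Delta)=2-2g(W)\le 2$, so $\Link(v_W)$ has at most two vertices, which is exactly the pseudomanifold bound on $(n-1)$-faces. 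This single inequality both forces $N\le n+2$ and pins down the number of $n$-cells in each case, after which the three triangulations fall out by inspection. Your approach is more elementary and more transparently combinatorial; the paper's homological setup, on the other hand, makes the role of $H_n(\mathcal{D}(\Delta))$ explicit and connects directly to the cohomological criterion in Remark~\ref{boundary<1Pic1}, which is reused later.

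Two minor points. First, the references you cite for connectivity, \cite[Proposition 4.37]{Kollar2013a} and \cite[Theorem 4.40]{Kollar2013a}, concern connectedness of the non-klt locus; the statement you actually need---that a positive-dimensional intersection of ample divisors is connected---is the Hodge index argument the paper invokes (reduce by hyperplane sections to a surface and apply Hodge index on a resolution). Second, your claim that a $0$-dimensional stratum lies on exactly $n+1$ of the $D_i$ is the content of the qdlt hypothesis (cf.\ \cite[Proposition 34]{deFernexKollarXu2017}); it is worth making this citation explicit since the inequality $N\le n+2$ and the exclusion of $(n+1)$-cells both hinge on it.
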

 \begin{proof}
Let $\Delta^{=1} = \sum_{i=1}^{m+1}\Delta_i$. Any  stratum $W$ of $\Delta^{=1}$  
is a connected (irreducible) component of $\Delta_{i_1}\cap \ldots \cap \Delta_{i_s}$ for certain $I=\{i_1, \ldots, i_s\}\subseteq \{1,\ldots, m+1\}$. The restriction to $W$ of $\Delta_i$ for $i \notin I$, denoted $\Delta_i|_W$, is an ample divisor in $W$. By the Hodge index theorem,  $\Delta_i|_W$ is non-empty and connected, as long as $\dim W\geq 2$. Indeed, suppose on the contrary that an ample divisor is not connected; by slicing with general hyperplane sections, we can reduce to the case of a (possibly singular) surface and derive a contradiction applying the Hodge Index theorem to the resolution of the normalization of the surface. 

\textit{Case} $(m<n)$. Suppose that $\Delta^{=1}$ does not contain any $0$-dimensional strata, or equivalently $m<n$. Since all $\Delta_i|_W$ are ample and intersect in a unique connected component (we follow the same notation of the previous paragraph), any collection of $s+1$ 0-cells of $\mathcal{D}( \Delta)$ is the set of vertices of a unique $s$-simplex. Hence, $\mathcal{D}( \Delta)$ is a $m$-simplex as in (\ref{simplex}).

\textit{Case} $(m\geq n)$. Denote by $D_i$ and $\sigma^{m}_i$ the $i$-th skeleton of $\mathcal{D}( \Delta)$ and $\sigma^{m}$ respectively. Arguing as in the previous case, we note that there are compatible isomorphism $ \sigma^{m}_{n-1} \simeq D_{n-1}$ and inclusion $ \sigma^{m}_{n} \hookrightarrow D_{n}$ of $\Delta$-complexes. We
identify $\sigma^{m}_{n}$ with its image in $D_n$ and regard $\sigma^m_i \subseteq D_i$ for all $i \leq n$.

 Then the long exact sequences in homology of the pairs $(D_{i}, \sigma^{m}_{i})$ with integral coefficient give: 
\begin{enumerate}
\item $H_i(\mathcal{D}( \Delta), \Z)=H_i(D_{n-1})=H_i(\sigma^{m}_{n-1})=H_i(\sigma^{m})=0$ for $0<i<n-1$;
\item $H_{n-1}(\mathcal{D}( \Delta), \Z)=H_{n-1}(D_n)=0$, since the latter group fits into the following exact sequence
\[0=H_{n-1}(\sigma^{m}_n)\to H_{n-1}(D_{n})\to H_{n-1}(D_n, \sigma^{m}_n)=0.\]
The first identity $H_{n-1}(\sigma^{m}_n)=0$ follows from Remark \ref{homologyskeleton}. For the latter vanishing, note that the homology of the pair $(D_n, \sigma^{m}_n)$ is isomorphic to the reduced homology of $D_n/ \sigma^{m}_n$, i.e. the bouquet of $n$-dimensional spheres obtained by identifying $\sigma^{m}_n \subseteq D_n$ (and in particular $D_{n-1}$) to a point: this implies that $H_{i}(D_n, \sigma^{m}_n)=0$ for all $i \neq n$; see \cite[Proposition 2.22]{Hatcher2002}.  
\item \label{item:les} the long exact sequence of the pair $(\mathcal{D}( \Delta), \sigma^{m}_n)$ gives
\[0\to H_{n}(\sigma^{m}_n)\to H_{n}(\mathcal{D}( \Delta))\to H_{n}(\mathcal{D}( \Delta), \sigma^{m}_n)\to 0.\]
%

%
\end{enumerate}  

If $H_n(\mathcal{D}( \Delta))=0$, then $\mathcal{D}( \Delta)= \sigma^{n}$. Indeed, we have
\[
\rank \widetilde{H}_n(\sigma^{m}_n)={{m}\choose{n+1}}=0 \quad \Rightarrow \quad  m \leq n,
\]
which in particular implies $m=n$ and $\sigma^{m}_n=\sigma^{n}$. Then the short exact sequence in (\ref{item:les}) yields
\[
H_n(\mathcal{D}( \Delta))=0 \quad  \Rightarrow \quad H_n(\mathcal{D}( \Delta), \sigma^{n})=0.
\] 
Finally, we conclude that $\mathcal{D}( \Delta)= \sigma^{n}$:
the quantity $\rank H_n(\mathcal{D}( \Delta), \sigma^{m}_n)$ (which equals $\rank \widetilde{H}_n(\mathcal{D}( \Delta)/\sigma^{m}_n)$) measures the number of $n$-cells in $\mathcal{D}( \Delta)$ not contained in our distinguished copy of $\sigma^{m}_n \subseteq D_n = \mathcal{D}( \Delta)$, and in this case there is none of these $n$-cells.

If $H_n(\mathcal{D}( \Delta))\neq 0$, then $\mathcal{D}( \Delta)$ is PL-homemorphic to a sphere. To this end, we first show the following Claim \ref{Claimtopman}. Without loss of generality, suppose that $n>0$ and $\mathcal{D}( \Delta)$ is connected; otherwise, it is either a point, or $\mathcal{D}(\Delta)$ is not connected, in which case $\Delta$ can only be the union of two points on $\PP^1$ for the Hodge Index theorem and the Calabi--Yau assumption; see also the more general argument \cite[Proposition 4.37]{Kollar2013a}.

\begin{claim}\label{Claimtopman}
In the hypothesis of Theorem \ref{conjsncFanoPic1}, and if $H_n(\mathcal{D}( \Delta))\neq 0$, then $\mathcal{D}( \Delta)$ is a closed orientable topological manifold of dimension $n$. 
\end{claim} 
\begin{proof} We argue by induction on dimensions. The base case, namely $n\leq 3$, is assured by dimensional argument; see for instance \cite[\S 33]{KollarXu2016}. Suppose now that the statement of Theorem \ref{conjsncFanoPic1} holds for varieties of dimension $\leq n$. We prove it for $\dim Y=n+1$. 

Note first that the irreducible components of the different $\Diff_{\Delta_i}(\Delta)^{=1}$ are ample divisors in $\Delta_i$, which means that they satisfy the induction hypothesis. 
  The link of any cell in $\mathcal{D}( \Delta)$ (eventually after a barycentrical subdivision) is homeomorphic to the dual complex $\mathcal{D}( \Diff^*_{W}(\Delta))$ for some stratum $W$ of codimension $d$; see \ref{definition dual complex} (\ref{linkdifferent}). By induction, this link is PL-homeomorphic to a $(d-1)$-dimensional sphere or a $(d-1)$-dimensional ball. Observe that lower-dimensional balls cannot occur: the condition $H_n(\mathcal{D}( \Delta), \QQ)\neq 0$ implies that $\mathcal{D}(\Delta)$ has maximal dimension, and so $\mathcal{D}( \Diff^*_{W}(\Delta))$ does, due to \cite[Theorem 4.40]{Kollar2013a}. As a result, $\mathcal{D}( \Delta)$ is a (connected) topological manifold eventually with boundary $\partial \mathcal{D}( \Delta)$.
  
  Now, if $H_n(\mathcal{D}( \Delta))\neq 0$ (which implies also $H_n(\mathcal{D}( \Delta), \partial \mathcal{D}( \Delta))\neq 0$), then $\mathcal{D}( \Delta)$ is $\Z$-orientable. By Lefschetz's duality we have \(0 \neq H_n(\mathcal{D}( \Delta))\simeq H^0(\mathcal{D}( \Delta), \partial \mathcal{D}( \Delta))\), which implies $\partial \mathcal{D}( \Delta)=\emptyset$, as required.  
  \end{proof}
  As a corollary, if $H_n(\mathcal{D}( \Delta)) \neq 0$, then $\rank H_n(\mathcal{D}( \Delta))=1$. Further, the long exact sequence of the pair $(\mathcal{D}( \Delta), \sigma^{m}_n)$ gives
\[0\to H_{n}(\sigma^{m}_n)\to H_{n}(\mathcal{D}( \Delta))\simeq \Z\to H_{n}(\mathcal{D}( \Delta), \sigma^{m}_n)\to 0.\]

We are left with two options: either $H_{n}(\mathcal{D}( \Delta), \sigma^{m}_n)=0$ or $\neq 0$. In the former case, $m=n+1$ by Remark \ref{homologyskeleton}, and $\mathcal{D}( \Delta)= \sigma^{n+1}_n= \partial \sigma^{n+1} \simeq \mathbb{S}^n$. Otherwise, $H_{n}(\mathcal{D}( \Delta), \sigma^{m}_n)=\Z$ and $H_{n}(\sigma^{m}_n)=0$, which implies that $n=m$. In other words, $\mathcal{D}( \Delta)$ is obtained by attaching an additional $n$-cell to the standard simplex $\sigma^{n}$, so it is a non-simplicial sphere of dimension $n$.

\end{proof}
\begin{rmk}
In order to prove the first statement of Theorem \ref{conjsncFanoPic1}, after showing the vanishing of the torsion of the homology of $\mathcal{D}( \Delta)$, we could have concluded by invoking the (generalized) Poincar\'{e} conjecture.
\end{rmk}
\begin{rmk}\label{complexity} The proof of Theorem \ref{conjsncFanoPic1} shows that a qdlt anti-canonical divisor on a variety $Y$ with $\rho(Y)=1$ has at most $\dim Y+1$ irreducible components. This is also a consequence of the non-negativity of the complexity of the log pair $(Y, \Delta)$; see \cite[Corollary 1.3]{BrownMcKernanSvaldiEtAl2016}.
\end{rmk}
\begin{rmk}
We bring to the attention of the reader a precedent result by Danilov, \cite[Proposition 3]{Danilov1975}, which states that if $(Y, \Delta)$ is a snc pair of dimension $n+1$ such that at least one of the irreducible components of $\Delta$ is ample, then $\mathcal{D}( \Delta)$ has the homotopy type of a bouquet of $n$-dimensional spheres. The virtue of Theorem \ref{conjsncFanoPic1} is that it provides a complete description of the $\Delta$-complex structure of $\mathcal{D}( \Delta)$, which we will exploit in an essential way in the following.
\end{rmk}

\begin{rmk}\label{boundary<1Pic1}
In the last case $H_n(\mathcal{D}( \Delta))\neq 0$ in the proof, one could avoid the induction argument by computing directly $H_{n}(\mathcal{D}( \Delta))\simeq \Z$; see \cite[claim 32.3]{KollarXu2016}.  The claim 32.3 in \cite{KollarXu2016} also implies that if the dlt logCY pair $(Y, \Delta)$ has maximal intersection, then the following statements are equivalent:
\begin{enumerate}
\item $h_n(\mathcal{D}( \Delta))=1$; 
\item $\Delta^{=1}=\Delta$ and $K_X + \Delta \sim 0$.
\end{enumerate}  Indeed, if $\Delta^{<1}\neq 0$, then $K_Y+\Delta^{=1}\sim_{\Q} -\Delta^{<1}$ and we have
\[h_{n}(\mathcal{D}( \Delta), \C)=h^n(Y, \OO_{\Delta^{=1}})=h^{n+1}(Y, \OO(-\Delta^{=1}))=h^{0}(Y, K_Y+\Delta^{=1})=0,\]
which is a contradiction. Conversely, if $K_X + \Delta^{=1} \sim 0$, the previous sequence of equalities yields $h_n(\mathcal{D}( \Delta))= 1$.

As a corollary, if $\Delta^{<1} \neq 0$, $\mathcal{D}(\Delta)$ cannot be a $n$-dimensional sphere (since then $h_n(\mathcal{D}( \Delta))\neq 1$), and by Theorem \ref{conjsncFanoPic1} it is a standard simplex; see also \cite[\S 22]{KollarXu2016}.
\end{rmk}
\begin{rmk}\label{homologyskeleton}
The reduced singular homology of the $n$-th skeleton of a standard simplex $\sigma$ of dimension $m$ is concentrated in degree $n$ and
\[
\widetilde{H}_n(\sigma^m_n, \Z)=\Z^{m\choose{n+1}}.
\] 
Indeed, the long exact sequence of the pair $(\sigma^m_{n+1}, \sigma^m_{n})$ gives
\[0=H_{i+1}(\sigma^m_{n+1}, \sigma^m_{n}) \to H_i(\sigma^m_{n})\to H_i(\sigma^m_{n+1}) \quad \text{ for }i\neq n.\]
Recursively, we note that $H_i(\sigma^m_n)\to H_i(\sigma^m)$ is an injective map for $i < n$, but since $H_i(\sigma^m)=0$ for $i\neq 0$, we conclude that $H_i(\sigma^m_n)=0$ for $0<i<n$ and that the reduced homology of $\sigma^m_n$ is concentrated in degree $n$.

The long exact sequence of the pair $(\sigma^m_n, \sigma^m_{n-1})$ gives also
\[0=\widetilde{H}_{n}(\sigma^m_{n-1})\to \widetilde{H}_n(\sigma^m_{n})\to \widetilde{H}_n(\sigma^m_{n}, \sigma^m_{n-1})\to \widetilde{H}_{n-1}(\sigma^m_{n-1})\to \widetilde{H}_{n-1}(\sigma^m_{n})=0\]
Since $\rank H_i(\sigma^m_{i}, \sigma^m_{i-1})={m+1\choose{i+1}}$, we obtain the following formula by recursion
\[\rank \widetilde{H}_n(\sigma^m_n)=(-1)^{n+1}\sum_{i=0}^{n+1}(-1)^i{{m+1}\choose{i}}={m \choose n+1},\]
which can be easily checked using the identity
\[{m \choose n+1}={m-1 \choose n+1}+{m-1 \choose n}.\]
\end{rmk}
%
 

\section{Generalities on the dual complex of logCY pairs on Mori fibre spaces }\label{generalitiesdualcomplexpicard2} Here and in the following, $(Y, \Delta)$ is a $\Q$-factorial dlt logCY pair of dimension $n+1$ with $\Delta^{=1} = \sum_{i=1}^{m+1}\Delta_i$. Suppose that there exists a morphism $\pi\colon Y \to Z$ of relative dimension $r$ and relative Picard number one. The goal is to identify the PL-homeomorphism type of the dual complex $\mathcal{D}( \Delta)$. In this section we collect some general facts about logCY pairs on Mori fibre spaces.

 \begin{defn}\label{defnhorizontal}
 A \textbf{horizontal lc centre} (or a horizontal stratum) of the log pair $(Y, \Delta)$ is a lc centre of $(Y, \Delta)$ which dominates $Z$. A lc centre is said \textbf{vertical} if it is not horizontal. 
 \end{defn}
 Mind that in the sequel horizontal and vertical divisors will always refer to components of $\Delta^{=1}$.
 \begin{defn}
 A vertical lc centre $W$ is \textbf{maximal} if it is not contained in any other vertical lc centre.
 \end{defn}
 \begin{defn}
 The {dual complexes of the horizontal or vertical divisors} in the support of $\Delta^{=1}$ are denoted $\mathcal{D}^{\text{hor}}$ or $\mathcal{D}^{\text{vert}}$ respectively, and they are regular subcomplexes of $\mathcal{D}( \Delta)$.
 \end{defn}
 Let $(F_{\text{gen}}, \Delta|_{F_{\text{gen}}})$ be the restriction of the logCY pair $(Y, \Delta)$ to a general fibre of $\pi$, denoted $F_{\text{gen}}$. Any stratum of $\Delta|_{F_{\text{gen}}}$ is an irreducible component of the intersection of
 a unique horizontal stratum of $\Delta$ with $F_{\text{gen}}$. Equivalently, the restriction of horizontal strata to $F_{\text{gen}}$ induces a PL-map
 \begin{equation}\label{definition map r}
 \text{hor}:\mathcal{D}( \Delta|_{F_{\text{gen}}} ) \to \mathcal{D}^{\text{hor}}.
 \end{equation}
 Properties of the map $\text{hor}$ are discussed in the following; see in particular next item \ref{numbhorzdivisors}.
 \begin{defn}
 A log pair $(Y, \Delta)$, equivalently $\mathcal{D}( \Delta)$, has \textbf{combinatorial product type} if 
  \begin{enumerate}
  \item any intersection of horizontal divisors is horizontal, \emph{i.e.}
    \[\mathcal{D}^{\text{hor}}=\mathcal{D}( \Delta|_{F_{\text{gen}}} ).\]
    \item any horizontal stratum intersect every vertical strata in a unique connected component, \emph{i.e.}
  \[\mathcal{D}( \Delta)= \mathcal{D}^{\text{vert}}*\mathcal{D}^{\text{hor}}.\]  
  
  \end{enumerate} 
 \end{defn}
 
 Since $\rho(Y/Z)=1$, the following properties hold.
 \begin{enumerate}[label=(\roman*)]
 \item \textit{The image of a vertical divisor is a divisor, and conversely the preimage of an irreducible divisor on $Z$ via $\pi$ is an irreducible divisor}. This is an application of Zariski's lemma (cf. \cite[Theorem 4.14]{Shafarevich2013}) over a codimension one point; see \cite[Lemma 5.1 and 5.2]{HogadiXu009}.
 \item \textit{Horizontal divisors restrict to ample divisors on any fibre}.
 \item \label{numbhorzdivisors} \textit{The intersection of irreducible horizontal divisors is always non-empty and horizontal in codimension $\leq r$, and connected in codimension $\leq r-1$}, since the restriction of horizontal divisors to a stratum of $\Delta|_{F_{\text{gen}}}$ is ample, and by the Hodge index theorem it is non-empty and connected in codimension $\leq r-1$. In particular, the map $\text{hor}$ defined in (\ref{definition map r}) is an isomorphism on the $(r-1)$-th skeleton and it is injective if any horizontal stratum of codimension $r$ intersects vertical strata in a unique connected component.
 \item \label{dualcomplexgeneral fibre}  The $\Delta$-complex structure of $\mathcal{D}( \Delta|_{F_{\text{gen}}})$ is one of those listed in Theorem \ref{conjsncFanoPic1}. In particular, \textit{there are at most $r+1$ horizontal divisors}; see alternatively Remark \ref{complexity}.
 \item \label{verticallogcentre} \textit{A maximal vertical lc centre is either a stratum of codimension $r+1$ or a vertical divisor.} Suppose that $W$ is a maximal vertical lc centre but not a vertical divisor. Then, $W$ is an irreducible component of the intersection of horizontal divisors $\Delta_{i_0}\cap \ldots \cap \Delta_{i_r}$ for certain  $I=\{i_0, \ldots, i_r\}\subseteq \{1,\ldots, m+1\}$, with $|I|=r+1$ by \ref{numbhorzdivisors} and \ref{dualcomplexgeneral fibre}. 
%
 \end{enumerate}
 
 \begin{prop}\label{canonicalbundleformula} \emph{\cite[Theorem 8.5.1]{Kollar2007}} \emph{(Canonical bundle formula)}\label{Canonical bundle formula} In the previous hypothesis as at the beginning of the section, we can write
 \[0 \sim_{\Q} K_Y+\Delta \sim_{\Q}\pi^*(K_Z + B + J),\] 
 where 
 \begin{enumerate}
 \item (moduli b-divisor) $J$ is a pseudo-effective $\Q$-linear equivalence class;
 \item \label{coefficientB} (boundary b-divisor) $B$ is a $\Q$-divisor with coefficient along the prime divisor $D$ given by
 \[\operatorname{coeff}_B(D)=\sup_E\left\lbrace 1-\frac{1+a(Y, \Delta, E)}{\operatorname{mult}_E(\pi^*D)}\right\rbrace, \]
 where the supremum is taken over all the divisors $E$ over $Y$ which dominate $D$. In particular, $D$ is dominated by a vertical lc centre if and only if $\operatorname{coeff}_B(D)=1$.
 \end{enumerate}
 \end{prop}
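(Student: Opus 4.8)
The plan is to obtain the whole statement from Kollár's canonical bundle formula for lc-trivial fibrations, \cite[Theorem 8.5.1]{Kollar2007}, and then to read off the final equivalence from the coefficient formula for $B$. The first step is to check that $\pi$ is an lc-trivial fibration: the pair $(Y,\Delta)$ is log canonical, being dlt, and $K_Y+\Delta\sim_\Q 0 = \pi^*0$ is $\Q$-linearly trivial, hence in particular $\Q$-linearly trivial over $Z$ and pulled back from $Z$. (Only these two properties are needed; the $\Q$-factoriality and the relative Picard number play no role here.) The cited theorem then applies and produces the decomposition $K_Y+\Delta\sim_\Q\pi^*(K_Z+B+J)$, in which $B$ is the discriminant divisor with coefficients given by the displayed log canonical threshold formula, and $J$ is the moduli part, whose pseudo-effectivity is the positivity of the moduli $\Q$-b-divisor and is part of the cited statement. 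I would take this positivity as a black box.

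It remains to establish the equivalence in the last sentence, which I would deduce from the coefficient formula alone. The key preliminary remark is that log canonicity of $(Y,\Delta)$ gives $1+a(Y,\Delta,E)\geq 0$ for every divisor $E$ over $Y$, so each term $1-\frac{1+a(Y,\Delta,E)}{\operatorname{mult}_E(\pi^*D)}$ is $\leq 1$ and therefore $\operatorname{coeff}_B(D)\leq 1$. Moreover the supremum is attained: on a log resolution $g\colon Y'\to Y$ of $(Y,\Delta+\pi^*D)$ the divisors dominating $D$ that compute the discriminant can be taken among the finitely many prime divisors of $Y'$, so the supremum is a maximum over a finite set.

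With the supremum attained, $\operatorname{coeff}_B(D)=1$ holds if and only if some divisor $E$ dominating $D$ realises the value $1$, that is $1+a(Y,\Delta,E)=0$; equivalently $E$ is an lc place ($a(Y,\Delta,E)=-1$) whose center on $Y$, say $W:=c_Y(E)$, satisfies $\pi(W)=D$. Since $D\subsetneq Z$ is a prime divisor, $W$ does not dominate $Z$, so $W$ is a vertical lc centre dominating $D$; this is the forward direction. Conversely, given a vertical lc centre $W$ with $\pi(W)=D$, by definition there is an lc place $E$ with $c_Y(E)=W$ and $a(Y,\Delta,E)=-1$; because $W\subseteq\pi^{-1}(D)=\supp\pi^*D$ we have $\operatorname{mult}_E(\pi^*D)>0$, so the corresponding term equals $1-\frac{0}{\operatorname{mult}_E(\pi^*D)}=1$ and forces $\operatorname{coeff}_B(D)=1$.

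The only genuinely deep ingredient is the canonical bundle formula itself, in particular the pseudo-effectivity of $J$, which I do not attempt to reprove; this is where I expect the real difficulty of a self-contained argument to lie. The elementary points that still require care are the identification of ``$E$ dominates $D$'' with ``$\pi(c_Y(E))=D$'', the attainment of the supremum on a log resolution, and the fact that the center of an lc place over a vertical lc centre dominating $D$ lies in $\supp\pi^*D$, so that the relevant multiplicity is strictly positive. Granting \cite[Theorem 8.5.1]{Kollar2007}, the proof reduces to exactly this bookkeeping.
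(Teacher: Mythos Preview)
The paper provides no proof of this proposition; it is stated as a direct citation of \cite[Theorem 8.5.1]{Kollar2007}, with the final equivalence left implicit in the coefficient formula. Your proposal follows the same approach---deferring the formula and the pseudo-effectivity of $J$ to Koll\'ar---and additionally spells out the elementary verification of the last sentence, which is correct as written.
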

 \begin{rmk} \label{rmk:logCYpaironthebottom}
 If $\rho(Z)=1$ and $B \neq 0$, then there exists a logCY pair $(Z, B+J^\prime)$ such that $B+J^\prime$ is a boundary $\Q$-linearly equivalent to $B+J$ and the $\Q$-divisor $J^\prime$ has coefficient smaller than one. Indeed, under these hypotheses, $Z$ is Fano and it has klt singularities; see for instance \cite[Proposition 5.5]{HogadiXu009}. In particular, by \cite[Corollary 1.13]{HaconMckernan2007} it is rationally chain connected  with rational singularities, thus $H^1(Z, \mathcal{O}_Z)=0$, and there exists $r \in \Q$ such that $J \sim_\Q rH$, where $H$ is the ample generator of $\Pic(Z)$.
  
  If $B=0$ and we are interested in the PL-homeomorphism type of $\mathcal{D}(\Delta)$, we can avoid to induce any logCY structure on $Z$. Indeed, in this case  $\mathcal{D}(\Delta)=\mathcal{D}^{\text{hor}}$, as the condition $B=0$ implies that there are no vertical divisors, and the restriction map $\text{hor}:\mathcal{D}( \Delta_{F_{\text{gen}}}) \twoheadrightarrow \mathcal{D}^{\text{hor}}$ is surjective, since any face in the complement of the image corresponds to vertical intersection of horizontal divisors, which would contribute to $B \neq 0$. Hence, either the map $\text{hor}$ is injective and $\mathcal{D}(\Delta)=\mathcal{D}( \Delta_{F_{\text{gen}}})$ can be computed by means of Theorem \ref{conjsncFanoPic1}, or $\text{hor}$ is not and $\mathcal{D}(\Delta)$ can be described as in case \(B^{=1}=0\) in \S \ref{boundary B empty}.
 \end{rmk}

\section{Dual complex of logCY pairs on Mori fibre spaces with Picard number 2} 
In this section we achieve the goal of identifying the PL-homeomorphism type of $\mathcal{D}( \Delta)$ under the hypotheses at the beginning of \S \ref{generalitiesdualcomplexpicard2}, and the additional condition that $\rank \Pic(Y)=2$. 
\begin{thm}[$\rank \Pic(Y)=2$]\label{DualComplexlogCYMorifibrespace}
 Let $(Y, \Delta)$ be a dlt pair such that:
 \begin{enumerate}
 \item (Picard number two) $Y$ is a $\Q$-factorial projective variety of dimension $n+1$ with $\rho(Y)=2$;
 \item (Mori fibre space) $\pi\colon Y \to Z$ is a Mori fibre space of relative dimension $r$;
 \item (logCY) $K_Y+\Delta \sim_\Q 0$.
 \end{enumerate}
 
 Then, $\mathcal{D}( \Delta)$ is PL-homeomorphic either to a ball $\mathbb{B}^{m}$ of dimension $m \leq n$ or to a sphere $\mathbb{S}^{m}$ of dimension $m=r-1, n-r$ or $n$.   
 \end{thm}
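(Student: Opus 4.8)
The plan is to rebuild $\mathcal{D}(\Delta)$ out of the two Picard-number-one pieces attached to $\pi$ — the general fibre and the base — and then to control precisely how far the pair is from being their join. Since $\rho(Y)=2$ and $\rho(Y/Z)=1$, the base has $\rho(Z)=1$. On the fibre side, every horizontal divisor restricts to an ample divisor on $F_{\text{gen}}$ by property (ii) of \S\ref{generalitiesdualcomplexpicard2}, so Theorem \ref{conjsncFanoPic1}, in the ample form of Remark \ref{weakerhypconjpic1}, applies to the logCY pair $(F_{\text{gen}}, \Delta|_{F_{\text{gen}}})$: its dual complex $\mathcal{D}(\Delta|_{F_{\text{gen}}})$ is a simplex $\mathbb{B}^{m'}$ with $m'\leq r-1$ or the sphere $\mathbb{S}^{r-1}$, with at most $r+1$ horizontal divisors (item \ref{dualcomplexgeneral fibre}). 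On the base side I would run the canonical bundle formula (Proposition \ref{canonicalbundleformula}) together with the remark following it to equip $Z$ with a logCY structure $(Z, B+J')$ of Picard number one; its components $B^{=1}$ are exactly the divisors dominated by vertical lc centres (item \ref{coefficientB}), and Theorem \ref{conjsncFanoPic1} gives that $\mathcal{D}(B^{=1})$ is a simplex $\mathbb{B}^{m''}$ with $m''\leq n-r$ or the sphere $\mathbb{S}^{n-r}$.

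First I would dispose of the model situation of \emph{combinatorial product type}, in which $\mathcal{D}(\Delta)=\mathcal{D}^{\text{vert}}*\mathcal{D}^{\text{hor}}$ with $\mathcal{D}^{\text{hor}}=\mathcal{D}(\Delta|_{F_{\text{gen}}})$ and $\mathcal{D}^{\text{vert}}=\mathcal{D}(B^{=1})$. Here the result is pure join bookkeeping on the explicit models of Theorem \ref{conjsncFanoPic1}: a join with a simplex factor is a ball, a degenerate join with an empty factor collapses to the surviving factor (yielding the isolated spheres $\mathbb{S}^{r-1}$ when $B=0$ and $\mathbb{S}^{n-r}$ when there are no horizontal divisors), and $\mathbb{S}^{n-r}*\mathbb{S}^{r-1}=\mathbb{S}^n$. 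Because the fibre and base spheres only ever occur in their top dimensions, no sphere dimension other than $r-1$, $n-r$, $n$ can appear, matching the statement exactly; realizability I would read off the hyperplane arrangements on $\PP^{n-r+1}\times\PP^r$ promised in the proof of Theorem \ref{mainthm}.

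The heart of the argument is to show that nothing genuinely new occurs outside product type. Two deviations are possible, both isolated in \S\ref{generalitiesdualcomplexpicard2}. The first is an extra vertical stratum: when $\mathcal{D}(\Delta|_{F_{\text{gen}}})=\partial\sigma^{r}=\mathbb{S}^{r-1}$ there are $r+1$ horizontal divisors, and by item \ref{verticallogcentre} the intersection $\Delta_{0}\cap\cdots\cap\Delta_{r}$ may be a nonempty vertical lc centre $W$ of codimension $r+1$ lying over a divisor $D\subset Z$ with $\operatorname{coeff}_{B}(D)=1$, even though $\pi^{-1}(D)$ need not lie in $\Delta^{=1}$; this is exactly where the identification $\mathcal{D}^{\text{vert}}=\mathcal{D}(B^{=1})$ breaks down, so that only $\mathcal{D}^{\text{vert}}\subseteq\mathcal{D}(B^{=1})$ is automatic. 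Exploiting that the PL-homeomorphism type of $\mathcal{D}(\Delta)$ is invariant under crepant modifications \cite{deFernexKollarXu2017}, I would reduce to a model in which the lc place over $D$ appears as a genuine vertical divisor containing (the transform of) $W$, so that every vertical lc centre of maximal dimension becomes a vertical divisor and product-type bookkeeping applies; this is the task of \S\ref{Reduction of vertical strata of maximal dimension to vertical divisors} and \S\ref{Vertical strata of maximal dimension are vertical divisors}. The second deviation, a horizontal stratum mapping two-to-one to $Z$, comes from the non-simplicial model of $\mathbb{S}^{r-1}$; here I would check that, since $\rho(Z)=1$, the two sheets glue into a plain suspension, keeping $\mathcal{D}(\Delta)$ a sphere or ball, with the nontrivial quotient $\PP^2(\R)*\mathbb{S}^{n-3}$ reserved for the higher-Picard, $\dim Z=2$ regime.

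Throughout I would induct on $\dim Y$ through the link identity \eqref{linkdifferent}, the link of each cell being again the dual complex of a lower-dimensional logCY pair of the same shape, and I would lean on the connectivity statements \cite[Proposition 4.37]{Kollar2013a} and \cite[Theorem 4.40]{Kollar2013a} to force the relevant intersections of divisors to be connected and of the expected dimension. The step I expect to be hardest is the first deviation: matching the extra vertical stratum $W$ with a vertex of $\mathcal{D}(B^{=1})$ requires comparing the lc combinatorics upstairs on $Y$ with the boundary $B$ manufactured by the canonical bundle formula precisely in the regime $\pi^{-1}(D)\notin\Delta^{=1}$, and one must verify that the crepant extraction accomplishing this is both available and harmless for the PL-homeomorphism type.
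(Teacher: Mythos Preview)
Your overall framework---splitting into a fibre piece governed by Theorem \ref{conjsncFanoPic1} on $(F_{\text{gen}},\Delta|_{F_{\text{gen}}})$ and a base piece governed by Theorem \ref{conjsncFanoPic1} on $(Z,B)$, then handling the product-type case by join bookkeeping---matches the paper. The identification of the two deviations (an extra vertical stratum $W$ of codimension $r+1$, and a horizontal codimension-$r$ stratum mapping two-to-one) is also correct, and these are exactly what \S\ref{Reduction of vertical strata of maximal dimension to vertical divisors} and \S\ref{Vertical strata of maximal dimension are vertical divisors} address.

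There is, however, a genuine gap in how you propose to treat the first deviation. You suggest passing to a crepant modification on which the lc place over $D=\pi(W)$ becomes an honest vertical divisor of the boundary, and then running product-type bookkeeping. But any such extraction raises the Picard number: the new model $Y'$ has $\rho(Y')\geq 3$, and the map $Y'\to Z$ has $\rho(Y'/Z)\geq 2$, so it is no longer a Mori fibre space. You lose precisely the hypothesis $\rho(Z)=1$ that lets you invoke Theorem \ref{conjsncFanoPic1} on the base, so the reduction does not close. (The section title ``Reduction of vertical strata\ldots'' is perhaps misleading: the paper does not modify $Y$.)

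What the paper does instead is a direct structural analysis on the original $(Y,\Delta)$. The key geometric input, established in \S\ref{Reduction of vertical strata of maximal dimension to vertical divisors}\ref{birationalmapW+}, is that each horizontal codimension-$r$ stratum $W^+$ maps \emph{birationally} onto $Z$; from this one reads off that $\mathcal{D}^{\text{hor}}=\mathcal{D}(\Delta|_{F_{\text{gen}}})=\partial\sigma^r$, that $\pi(W)=:B_0$ is a divisor, that there is at most one such $W$ when $\rho(Z)=1$ and $\dim Z>1$, and that $(Z,B^{=1})$ is qdlt. The dual complex is then assembled by hand in Theorem \ref{KWtrivial} (when $K_W\sim_{\Q}0$, so $W$ contributes only a capping cell) and Theorem \ref{KWnottrivial} (when $K_W\not\sim_{\Q}0$, where one glues $\mathcal{D}(B-B_0)*\mathcal{D}(\Delta|_{F_{\text{gen}}})$ to $\mathcal{D}(\Diff^*_W\Delta)*v_W$ along an explicit attaching map and runs through the three possibilities for $\mathcal{D}(B)$ from Theorem \ref{conjsncFanoPic1}). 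No birational modification, and no induction on $\dim Y$ via links, is used here. Your treatment of the second deviation is in the right spirit but understates the work: the case in \S\ref{boundary B sphere} where $\pi|_W$ is quasi-\'etale requires the Stein factorisation and a covering-map argument on $\nu_*\colon\mathcal{D}(B^\nu)\to\mathcal{D}(B)$, together with connectedness and the Hodge index theorem, to derive a contradiction.
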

 \begin{proof}
 We have the following dichotomy: either any maximal vertical strata is a vertical divisor or not. We study these cases separately in \S \ref{Vertical strata of maximal dimension are vertical divisors} and \S \ref{Reduction of vertical strata of maximal dimension to vertical divisors} respectively. The case with no vertical strata is included in \S \ref{Vertical strata of maximal dimension are vertical divisors}.
 \end{proof}

\subsection{There exists a maximal vertical stratum of codimension $>1$} \label{Reduction of vertical strata of maximal dimension to vertical divisors}  Let $(Y, \Delta)$ be a dlt pair as in Theorem \ref{DualComplexlogCYMorifibrespace}. Denote by $W$ a maximal vertical lc centre which is not a vertical divisor.  
The following facts hold.
\begin{enumerate}[label = (\roman*)]
\item\label{defW} \textit{$W$ is an irreducible component of the intersection of all the horizontal divisors in $\Delta$} due to the bound on the number of horizontal divisors in \ref{generalitiesdualcomplexpicard2}.\ref{dualcomplexgeneral fibre} and the description of $W$ in \ref{generalitiesdualcomplexpicard2}.\ref{verticallogcentre}.  In particular, $\mathcal{D}( \Delta|_{F_\text{gen}})\simeq \partial \sigma^{r}$ by Theorem \ref{conjsncFanoPic1}, since the restriction $\Delta|_{F_\text{gen}}$ of $\Delta$ to a general fibre $F_\text{gen}$ of the morphism $\pi$ consists of $r+1$ irreducible components.
\item \label{characterizationofW} Viceversa, \textit{any stratum of codimension $r+1$, intersection of horizontal divisors, is a maximal vertical stratum not contained in any vertical divisor of $\Delta$.} By dimensional reasons, $W$ is not dominant. Moreover, if $W$ were contained in a vertical divisor of $\Delta$, there would be at least $r+2$ divisors passing though its generic point, contradicting the dlt hypothesis.
\item \label{birationalmapW+} Let $W^+$ be a horizontal stratum of codimension $r$, which in particular contains the maximal vertical stratum $W$. Then \textit{the restriction map $$\pi|_{W^+}\colon W^+ \to Z$$ is a birational morphism.} We first claim that $\pi|_{W^+}$ is generically injective. In fact, it is generically finite, since a general fibre of $\pi|_{W^+}$ is a collection of $0$-dimensional strata of the log pair $(F_{\text{gen}}, \Delta|_{F_{\text{gen}}})$. Hence, we have that
\[
r+1=\#\{0\text{-dimensional strata in }(F_{\text{gen}}, \Delta|_{F_{\text{gen}}})\}= \sum_{W_i^+}\deg\pi|_{W_i^+},
\] 
where the first equality follows from \ref{defW} and the last summation runs over all the horizontal strata $W_i^+$ of codimension $r$. Since any $r$-uple of horizontal divisors intersects along a general fibre and there are exactly $r+1$ horizontal divisors due to \ref{defW}, there are at least $r+1={r+1 \choose r}$ such intersections; equivalently there are at least $r+1$ horizontal strata $W_i^+$. Now the previous equation forces that the $W_i^+$ are indeed $r+1$,  and $\deg\pi|_{W^+}=1$. Finally, the normality of $Z$ and Zariski's Main Theorem \cite[cor 11.4]{Hartshorne1977} implies the restriction map $\pi|_{W^+}$ is a birational morphism. In particular, $\mathcal{D}^{hor}=\mathcal{D}(\Delta|_{F_\text{gen}})$.

\noindent We remark that if $r>1$, the birationality of the morphism $\pi|_{W^+}$ follows from the weaker conclusion of \ref{defW}, i.e. $\mathcal{D}( \Delta|_{F_\text{gen}})\simeq \partial \sigma^{r}$. To this end, we are left to check that $\Delta$ has exactly $r+1$ horizontal divisor. Note that the new weak assumption implies that $\Delta$ has at most as many horizontal divisors as vertices in $\partial \sigma^{r}$, i.e. $r+1$. If $r>1$, the horizontal divisors are indeed exactly $r+1$; otherwise, there would exist an irreducible horizontal divisor $\Delta_0$ containing more than one irreducible component of $\Delta|_{F_{\text{gen}}}$, but these components are ample divisors in $F_{\text{gen}}$ and their intersection is not empty, i.e. $\Delta_0$ has a self-intersection, which contradicts the dlt assumption of $(Y, \Delta)$. 
 On the other hand, if $r=1$, the weaker assumption is not sufficient: $\pi|_{W^+}$ can be two-to-one, even though $\mathcal{D}( \Delta|_{F_\text{gen}})\simeq \partial \sigma^{1} = \mathbb{S}^0$. For instance, consider $Y=\PP^1 \times \PP^1$ with projection $\pi: Y \to Z=\PP^1$ and $\Delta \in |\mathcal{O}(2,2)|$ a smooth element of the anti-canonical system of $Y$: despite the fact that $\mathcal{D}(\Delta|_{F_\text{gen}})\simeq \mathbb{S}^0$, the map $\pi: \Delta \to Z$ is not birational.
\item \label{imageWdivisor} \textit{The image $B_0:=\pi(W)$ is an irreducible divisor of the boundary divisor $B$} of the canonical bundle formula; see Proposition \ref{canonicalbundleformula}.(2). The restriction map $\pi|_{W^+}\colon W^+ \to Z$ is birational by \ref{birationalmapW+}. Further, $W$ is cut out on $W^+$ by one horizontal divisor, and hence it is effective and $\pi|_{W^+}$-ample. By the negativity lemma \cite[Lemma 3.39]{KollarMori1998}, $W$ cannot be contracted by the morphism $\pi$.
\item \label{propertynumberofverticallogcentre} \textit{Maximal vertical lc centres not contained in any vertical divisor of $\Delta$ are disjoint}. In view of \ref{defW}, the union of all these maximal vertical lc centres is the intersection of all the horizontal divisors. By the dlt assumption, the (disjoint) connected components of this intersection are irreducible. 

\noindent Further, \emph{if $\rho (Z)=1$ and $\dim Z>1$, there exists at most one maximal vertical stratum not contained in any vertical divisor.} On the contrary, let $W$ and $W'$ be such vertical strata. Then the subvarieties $\pi(W)$ and $\pi(W')$ are ample divisors by \ref{imageWdivisor}, hence they intersects, although the lc centres $W$ and $W'$ are disjoint in $W^+$, defined as in \ref{birationalmapW+}. This contradicts Koll\'{a}r-Shokurov connectedness theorem \cite[Theorem 5.48]{KollarMori1998} over $\pi(W) \cap \pi(W')$. 
\item \label{ZB=1qdltpair} \textit{The pair $(Z, B^{=1})$ is qdlt.} The pair $(W^+, \Diff^*_{W^+}\Delta)$ and $(Z, \pi_*\Diff^*_{W^+}\Delta)$ are crepant birational logCY pairs because the morphism $\pi|_{W^+}$ is a contraction; see for instance \cite[Definition 10]{KollarXu2016}. Since $\pi_*\Diff^*_{W^+}\Delta^{=1}=B^{=1}$, the pair $(Z, B^{=1})$ is lc and its lc centres are dominated by lc centres of $\Delta$, since $(Z, B^{=1})$ is dominated by the crepant birational pair $(W^+, \Diff^*_{W^+}\Delta^{=1})$. We need to check that $(Z, B^{=1})$ is qdlt. By \cite[Proposition 5.5]{HogadiXu009} and \ref{propertynumberofverticallogcentre}, the pair $(Z,B-\sum_i\pi(W_i))$ is qdlt, where $W_i$ are all the maximal vertical lc centres not contained in a vertical divisor. 
Without loss of generality, we can restrict our analysis to a neighbourhood of $B_0\coloneqq \pi(W_0)$. Note that there are $d-1$ components of $B-B_0$ passing through any log centre of codimension $d$ of $(B_0,\Diff_{B_0}(B))$, because the lc centres of the dlt pair $(W, \Diff^*_W(\Delta))$ are cut by vertical divisors whose images via $\pi$ are components of $B^{=1}-B_0$ (cf. Proposition \ref{canonicalbundleformula}.(2)). Adding $B_0$ itself, any log centre of codimension $d$ is contained in $d$ components of $B^{=1}$, and we conclude that $(Z, B)$ is qdlt. 
\end{enumerate}
\begin{rmk}
Note that section \S \ref{Reduction of vertical strata of maximal dimension to vertical divisors} does not rely in an essential way on the hypothesis on the Picard number of $Y$. In the following, this information will be used only to constrain the topology or the triangulation of the dual complex $\mathcal{D}(B)$. 
\end{rmk}

\subsubsection{} We prove Theorem \ref{DualComplexlogCYMorifibrespace} under the additional hypothesis of the existence of the vertical stratum $W$. 

\begin{thm}\label{generalizationthmW}
 Let $(Y, \Delta)$ be a dlt pair such that:
 \begin{enumerate}[label=(\roman*)]
 \item $Y$ is a $\Q$-factorial projective variety of dimension $n+1$;
 \item (Mori fibre space) $\pi\colon Y \to Z$ is a Mori fibre space of relative dimension $r$;
 \item (logCY) $K_Y+\Delta \sim_\Q 0$;
 \item there exists a maximal vertical stratum which is not a vertical divisor.
 \end{enumerate}
 Let $B$ be the boundary divisor given by the canonical bundle formula.
 
Then, $\mathcal{D}(\Delta)$ is PL-homemorphic to $ \mathcal{D}(B^{=1})*\mathbb{S}^{r-1}$.
\end{thm}
\begin{proof}
Let $f:Y' \to Y$ be the blow-up of $Y$ along all the maximal vertical strata not contained in any vertical divisor. Denote any of this strata by $W$, and the corresponding exceptional divisor by $E$. The dlt pair $(Y', \Delta'\coloneqq \Delta + \text{Exc})$, where $\text{Exc}$ is the sum of all the $f$-exceptional divisors, is crepant birational to $(Y, \Delta)$. 

The dual complex $\mathcal{D}(\Delta')$ is a star subdivision of $\mathcal{D}(\Delta)$ by \cite{Stepanov2006} or \cite[\S 9]{deFernexKollarXu2017}. In the same way as for $\Delta$, we distinguish between horizontal and vertical strata of $\Delta'$ with respect to the morphism $\pi \circ f$. We claim that $(Y', \Delta')$ has combinatorial product type, and that 
\[\mathcal{D}(\Delta) \simeq_{\text{PL}} \mathcal{D}(\Delta') = \mathcal{D}(B^{=1})* \mathcal{D}(\Delta|_{F_{\text{gen}}}) = \mathcal{D}(B^{=1})*\partial \sigma^{r} \simeq \mathcal{D}(B^{=1})*\mathbb{S}^{r-1}.\]

In order to prove the claim, we observe the following facts.
\begin{enumerate} 
\item\label{item:horstrataE} Any horizontal stratum of $\Delta'$ intersects $E$, since $W$ is contained in any horizontal stratum of $\Delta$.
\item\label{item:horstrataU} Any horizontal stratum of $\Delta'$ intersects $E$ in a unique irreducible component. Otherwise, the image of a horizontal stratum of $\Delta'$ intersecting $E$ multiple times is not normal, which contradicts the dlt assumption.
\item \label{item: producttype} The pair $(E, \Diff^*_E(\Delta'))$ has combinatorial product type with respect to the morphism $f|_E$. Indeed, the maximal vertical strata of $\Diff^*_E(\Delta')$ are vertical divisors, since all the strata of $\Delta$ transverse to $W$ are pull-back of divisors on $Z$. Further, the horizontal strata $W^+$ of codimension $r$ restricts to sections of $f|_E$, since the blow-up $f$ restricts to an isomorphism on the strict transform of $W^+$. Hence, the same argument of the first paragraphs of \S \ref{Vertical strata of maximal dimension are vertical divisors} implies that $(E, \Diff^*_E(\Delta'))$ has combinatorial product type.
\item\label{item:horstrataONCE} Intersection of horizontal strata of $\Delta'$ is horizontal. Otherwise, the vertical intersection of horizontal divisors, say $W'$, is the intersection of all ($r+1$) horizontal divisors, since the same holds for $\Delta$ by \ref{Reduction of vertical strata of maximal dimension to vertical divisors}.\ref{defW}, and since the general fibres of $\pi$ and $\pi \circ f$ are isomorphic. The image of $W'$ dominates $W$, but then $W'$ is contained in $f^{-1}(W)=E$. This is a contradiction, since the codimension-$(r+1)$ stratum $W'$ is contained in $r+2$ divisors of $\Delta'$, namely $r+1$ horizontal divisors plus $E$. 
\end{enumerate}
By item (\ref{item:horstrataE}), (\ref{item:horstrataU}) and (\ref{item: producttype}), any horizontal stratum of $\Delta'$ intersects every vertical strata in a unique connected component, and by item (\ref{item:horstrataONCE}) any intersection of horizontal divisors is horizontal. So, the pair $(Y', \Delta')$ has combinatorial product type, and  $\mathcal{D}( \Delta')= \mathcal{D}^{\text{vert}}*\mathcal{D}(\Delta'|_{F_{\text{gen}}})$. 

The pair $(W^+, \Diff^*_{W^+}(\Delta))$, as defined in \ref{Reduction of vertical strata of maximal dimension to vertical divisors}.\ref{birationalmapW+}, is crepant birational to its strict transform $(W'^{+}, \Diff^*_{W'^{+}}(\Delta'))$ via $f$, so that \[\mathcal{D}^{\text{vert}}=\mathcal{D}(\Diff^*_{W'^{+}}(\Delta'))= \mathcal{D}(\Diff^*_{W^{+}}(\Delta))=\mathcal{D}(B^{=1}),\]
where the last equality follows from \ref{Reduction of vertical strata of maximal dimension to vertical divisors}.\ref{ZB=1qdltpair}.
Further, $\mathcal{D}(\Delta'|_{F_{\text{gen}}})=\mathcal{D}(\Delta|_{F_{\text{gen}}})= \partial \sigma^r$, since the general fibres of $\pi$ and $\pi \circ f$ are isomorphic. We conclude that $\mathcal{D}(\Delta) \simeq_{\text{PL}}  \mathcal{D}(B^{=1})*\partial \sigma^{r}*\partial \sigma^{r}$, as claimed.
\end{proof}
\begin{proof}[Proof of Theorem \ref{DualComplexlogCYMorifibrespace} under the additional assumption of the existence of $W$]

Suppose that $\rho(Y)=2$. Then by \ref{Reduction of vertical strata of maximal dimension to vertical divisors}.\ref{ZB=1qdltpair} and Theorem \ref{conjsncFanoPic1},  $\mathcal{D}(B^{=1})$ is PL-homeomorphic either to a ball $\mathbb{B}^{m}$ of dimension $m \leq {n-r}$ or to a sphere $\mathbb{S}^{n-r}$.
 Hence, Theorem \ref{generalizationthmW} implies that $\mathcal{D}(\Delta)$ is PL-homeomorphic either to a ball $\mathbb{B}^{m}$ of dimension $r \leq m \leq {n}$ or to a sphere $\mathbb{S}^{n}$.  
\end{proof}

\subsection{Maximal vertical strata are vertical divisors}\label{Vertical strata of maximal dimension are vertical divisors} Let $(Y, \Delta)$ be a dlt pair as in Theorem \ref{DualComplexlogCYMorifibrespace}. Suppose now that any maximal vertical strata is a vertical divisor. If  $(Y, \Delta)$ has combinatorial product type, then 
\[\mathcal{D}( \Delta)= \mathcal{D}^{\text{vert}}*\mathcal{D}^{\text{hor}}= \mathcal{D}( B)*\mathcal{D}( \Delta_{F_{\text{gen}}}).\]
By Theorem \ref{conjsncFanoPic1} and Remark \ref{rmk:logCYpaironthebottom}, $\mathcal{D}( \Delta)$ is then PL-homeomorphic to one of the following $\Delta$-complexes:
\begin{align*}
\sigma^k * \sigma^l & \simeq \sigma^m, \qquad \quad \,\, \text{if } m:=k+l+1 \leq n;\\
\mathbb{S}^{n-r}*\sigma^{k} & \simeq \begin{cases}
\sigma^m, \quad & \text{if }n-r+1\leq m:=n-r+k+1 \leq n;\\
\mathbb{S}^{n-r}, \quad & \text{if }\mathcal{D}^{\text{hor}}=\emptyset;
\end{cases}\\
\sigma^{k}*\mathbb{S}^{r-1} & \simeq \begin{cases}
\sigma^m, \quad & \text{if }r\leq m:=k+r \leq n;\\
\mathbb{S}^{r-1}, \quad & \text{if }\mathcal{D}^{\text{vert}}=\emptyset;
\end{cases}\\
\mathbb{S}^{n-r}*\mathbb{S}^{r-1} & \simeq \mathbb{S}^n.
\end{align*}

However, if this is not the case, by the connectivity of ample divisors the only horizontal strata which can fail to intersect a general fibre in a unique connected component have codimension $r$, namely those which restrict to points onto a general fibre. Call one of this horizontal strata $W$. The existence of $W$ implies that $\mathcal{D}( \Delta_{F_{\text{gen}}})$ has maximal dimension, and that it cannot be simplicial if $r>1$. Otherwise,  by Theorem \ref{conjsncFanoPic1}, $\mathcal{D}( \Delta_{F_{\text{gen}}})$ is either $\sigma^{r-1}$ or $ \partial \sigma^r$. 
However, the existence of $W$ implies that $\Delta$ has at least two 0-dimensional strata, which excludes $\mathcal{D}(\Delta_{F_{\text{gen}}}) \simeq \sigma^{r-1}$; while the last paragraph of \ref{Reduction of vertical strata of maximal dimension to vertical divisors}.\ref{birationalmapW+} excludes $\mathcal{D}(\Delta_{F_{\text{gen}}}) \simeq \partial \sigma^r$. Hence, by Theorem \ref{conjsncFanoPic1}, $\mathcal{D}( \Delta_{F_{\text{gen}}})$ is a non-simplicial sphere of type $\ref{conjsncFanoPic1}.(\ref{regularsphere})$, namely $\sigma^{r-1}\cup_{\partial \sigma^{r-1}}\sigma^{r-1}$. In particular, there exists exactly one horizontal stratum $W$ of codimension $r$, and  this
stratum maps generically two-to-one to $Z$ via $\pi$. The purpose of this section is to describe the PL-homeomorphism type of $\mathcal{D}(\Delta)$ assuming the existence of $W$. 

\subsubsection{} \label{boundary B empty}
If $B^{=1}=0$, then $\mathcal{D}( \Delta)=\mathcal{D}^{\text{hor}}$. Since $W$ maps generically two-to-one, the map $\text{hor}: \sigma^{r-1}\cup_{\partial \sigma^{r-1}}\sigma^{r-1}=\mathcal{D}( \Delta_{F_{\text{gen}}}) \twoheadrightarrow \mathcal{D}^{\text{hor}}$, defined in \ref{generalitiesdualcomplexpicard2}.(\ref{definition map r}), identifies the two cells of maximal dimensions of the domain. Hence, $\mathcal{D}( \Delta)=\sigma^{r-1}$. Therefore, in the following we can suppose that $B^{=1}\neq 0$.

\subsubsection{} Note that also in this case the pair $(Z, B^{=1})$ is qdlt. Write $\Delta = \Delta^{\text{vert}} + \Delta^{\text{hor}}$ such that the support of $\Delta^{\text{vert}}$ is vertical, and that of $\Delta^{\text{hor}}$ horizontal. Up to tensoring with the pull-back via $\pi$ of an ample divisor in $Z$, we can choose a general ample $\Q$-divisor $H$ in $Y$ with small coefficients such that $H \sim_{\pi} \Delta^{\text{hor}}$. The canonical bundle formula for the new pair $(Y,\Delta^{\text{vert}} + H)$ still holds (cf. \cite[\S 20]{KollarXu2016}), and it yields the same boundary part $B^{=1}$  by \ref{canonicalbundleformula}.(2) (but $B^{<1}$ may change). Therefore, \cite[Proposition 5.5]{HogadiXu009} implies that $(Z, B^{=1})$ is qdlt.

\subsubsection{} \label{boundary B simplex}
Now, if the branch locus of $\pi|_W$ has an
irreducible component $B_0$ of codimension one (e.g. when $Z$ is smooth, by the purity of the branch locus), then $(Y, \Delta + \pi^*B_0)$ is not lc at the generic point of $\pi^*B_0$, which forces $\operatorname{coeff}_{B}(B_0)<1$ by Proposition \ref{Canonical bundle formula}. As 
$B^{<1}\neq 0$, $\mathcal{D}( B)$ is a standard simplex $\sigma^{k}$ by Remark \ref{boundary<1Pic1}. The dual complex $\mathcal{D}( \Delta)$ is no more the join of the dual complex of the vertical and horizontal strata, but we still have a PL-map induced by $\text{hor}$ \[\sigma^{k}*\mathcal{D}( \Delta_{F_{\text{gen}}})=\sigma^{k}* (\sigma^{r-1}_{(1)}\cup_{\partial \sigma^{r-1}}\sigma^{r-1}_{(2)})\twoheadrightarrow \mathcal{D}( \Delta),\] which identifies the vertical strata contained in $W$ passing through the two 0-dimensional strata of $(W, \Diff^*_W(\Delta))$, namely $\partial \sigma^{k} * \sigma^{r-1}_{(1)}$ and $\partial \sigma^{k} * \sigma^{r-1}_{(2)}$. As a result, we obtain 
\begin{align*}
\mathcal{D}( \Delta) & = \sigma^{k}* (\sigma^{r-1}_{(1)}\cup_{\partial \sigma^{r-1}}\sigma^{r-1}_{(2)}) \mod \partial \sigma^{k} * \sigma^{r-1}_{(1)}\equiv\partial \sigma^{k} * \sigma^{r-1}_{(2)}\\
& = (\sigma^{k}* \sigma^{r-1}_{(1)})\cup_{\sigma^{k}*\partial \sigma^{r-1}\cup \partial \sigma^{k} * \sigma^{r-1}}(\sigma^{k}* \sigma^{r-1}_{(2)})\\
& = (\sigma^{k}* \sigma^{r-1}_{(1)})\cup_{\partial (\sigma^{k}*\sigma^{r-1})}(\sigma^{k}* \sigma^{r-1}_{(2)})\simeq \mathbb{S}^m, \quad \text{with } r-1 \leq m \leq n.
\end{align*}

\subsubsection{}\label{boundary B sphere} 
Without loss of generality, in the following we further assume that $\mathcal{D}( \Delta)$ has maximal dimension and $\Delta^{=1}=\Delta$. Indeed, as we remarked at the beginning of \S \ref{Vertical strata of maximal dimension are vertical divisors},  the existence of $W$ implies that $\mathcal{D}( \Delta|_{F_{\text{gen}}})$ has maximal dimension and it is homeomorphic to a sphere, so that $\Delta|_{F}^{=1}= \Delta|_{F}$ by Remark \ref{boundary<1Pic1}. Hence, if $\mathcal{D}( \Delta)$ does not have maximal dimension, then $\mathcal{D}( B)$ does not either, and by Theorem \ref{conjsncFanoPic1} it is a standard simplex or empty. Similarly, if $\Delta^{<1}\neq 0$, then $B^{<1}\neq 0$, and $\mathcal{D}( B)$ is a standard simplex by Remark \ref{boundary<1Pic1}. In both case, we can describe $\mathcal{D}( \Delta)$ as in \ref{boundary B empty} and \ref{boundary B simplex}. 

Therefore, we are left to consider the following case:
\begin{enumerate}
\item $\mathcal{D}( \Delta)$ has maximal dimension;
\item $\Delta^{=1}=\Delta$ and $B^{=1}=B \neq 0$;
\item the ramification locus of $\pi|_W$ has codimension $\geq 2$.
\end{enumerate}
We claim that this option cannot occur, by showing that $\pi|_{W}$ induces an identification between $\mathcal{D}( \Diff^*_W(\Delta))$ and $\mathcal{D}( B)$. Indeed, this 
is a contradiction, since the fibre via $\pi|_{W}$ of any 0-dimensional stratum of $(Z, B)$ is not connected (see next item \ref{pair is qdlt}), so that the PL-morphism induced by $\pi|_{W}$ cannot be a homeomorphism. 

In order to prove the claim, consider first the Stein factorization of the restriction morphism $\pi|_W$
\[
 \xymatrix{
 & (W, \Diff^*_W(\Delta)) \ar@{->}[ld]_{\varrho} \ar@{->}[d]^{\pi|_W}   \\
  (Z^{\nu}, B^{\nu}:=\nu^*(B)) \ar@{->}[r]^{\qquad \nu}_{ \qquad 2:1}&(Z,  B).}
\]
In particular, the following facts hold.
\begin{enumerate}[label = (\roman*)]
\item \label{pair is qdlt}\textit{The pair $(Z^{\nu}, B^{\nu})$ is lc logCY}, since the morphism $\nu$ is quasi-\'{e}tale, \textit{and qdlt}. Indeed, any lc centre of codimension $d$ is the intersection of exactly $d$ components of $B^{\nu}$: such lc centre dominates a lc centre of $(Z, B)$, so it is cut by at least $d$ divisors in $B^{\nu, =1}$, and it is dominated by a lc centre of $(W, \Diff^*_W(\Delta)= \pi^{-1}|_W(B))$, so it is cut by at most $d$ divisors in $B^{\nu, =1}$. 
\item \textit{The lc centres of $(Z^{\nu}, B^{\nu})$ are sent to lc centres of $(Z,B)$ via $\nu$} by \cite[Proposition 5.20]{KollarMori1998}. \textit{It means that there exists a PL-map 
\[\nu_*\colon \mathcal{D}( B^{\nu})\to\mathcal{D}( B).\]}
\item \label{alphafinite} \textit{The morphism $\nu_*\colon \mathcal{D}( B^{\nu})\to\mathcal{D}( B)$ has finite fibres of cardinality at most two}, since the map $\nu$ is finite, and any lc centre of $(Z, B)$ is dominated by lc centres of the same dimension in $(Z^{\nu}, B^{\nu})$. 
\item \label{alphaetale} \textit{The morphism $\nu_*\colon \mathcal{D}( B^{\nu})\to\mathcal{D}( B)$ is a topological covering map.} Let $v_d$ be a cell of maximal dimension, say $d$, in the ramification locus of $\nu_*$. By maximality, $\nu_*$ is a topological covering of degree two onto $\Link(v_d, \mathcal{D}( B))\simeq \mathbb{S}^{\dim Z-d-2}$. We claim that the existence of $v_d$ yields a contradiction, so that $\nu_*$ must be a topological covering map.
\begin{enumerate}
\item If $\dim Z-d-2>1$ or $\dim Z-d-2=0$, the link is simply-connected or the union of two points, so that \[\Link(\nu_*^{-1}(v_d), \mathcal{D}( B^{\nu}))\simeq \mathbb{S}^{\dim Z-d-2}\sqcup \mathbb{S}^{\dim Z-d-2}.\] This is a contradiction by the connectedness theorem \cite[Proposition 4.37]{Kollar2013a}. 
\item \label{surfacecaselink} If $\dim Z-d-2=1$, then $\Link(v_d, \mathcal{D}( B))$ is isomorphic to a circle with two or three vertices, due to Theorem \ref{conjsncFanoPic1}. Comparing with the previous connectivity argument, we can suppose that $\Link(\nu_*^{-1}(v_d), \mathcal{D}(B^{\nu}))$ is a connected topological covering space of $\Link(v_d, \mathcal{D}( B))$. Therefore, the $\Link(\nu_*^{-1}(v_d), \allowbreak \mathcal{D}( B^{\nu}))$ is a circle with four or six vertices.

This implies that there exist two 2-dimensional lc centres 
\begin{align*}
&(W,  \Diff^*_W(B):=\sum_i B_i)\\
&(W^{\nu},  \Diff^*_{W^\nu}(B^\nu):=\sum_i B^\nu_{i, 0} + \sum_i B^{\nu}_{i,1})
\end{align*} 
of the log pairs $(Z, B)$ and $(Z^\nu, B^\nu)$ respectively, such that:
\begin{enumerate}[label = (\arabic*)]
\item $W^{\nu}=\nu^{-1}(W)$;
\item $\Link(v_d, \mathcal{D}( B))= \mathcal{D}( \Diff^*_{W}(B))$;
\item $\Link(\nu_*^{-1}(v_d), \mathcal{D}( B^{\nu}))= \mathcal{D}(\Diff^*_{W^{\nu}}(B^{\nu}))$.
\end{enumerate}
\begin{center}
\tikzstyle{vertex}=[circle, draw, fill=black!50,inner sep=0pt, minimum width=2.5pt]
\begin{tikzpicture}[thick, scale = 1.2]%
\draw (-2,0) circle (0.5 cm);
\draw (-2,0.5) node[vertex, label=above:$B^{\nu}_{1,0}$]{};
\draw (-1.5,0) node[vertex, label=right:$B^{\nu}_{2,0}$]{};
\draw (-2,-0.5) node[vertex, label=below:$B^{\nu}_{1,1}$]{};
\draw (-2.5,0) node[vertex, label=left:$B^{\nu}_{2,1}$]{};
\node[align=right, below] at (2,-1)%
{$\mathcal{D}( \Diff^*_{W}(B))$};
\draw (2,0) circle (0.5 cm);
\draw (2,0.5) node[vertex, label=above:$B_1$]{};
\draw (2,-0.5) node[vertex, label=below:$B_2$]{};
\node[align=right, below] at (-2,-1)%
{$\mathcal{D}( \Diff^*_{W^\nu}(B^\nu))$};
\end{tikzpicture}
\end{center}
Note that $B_1$ is cut out on $W$ by an ample divisor of $Z$, as $\rho(Z)=1$. Hence, the curve $\nu^*(B_1)=B^{\nu}_{1,0}+B^{\nu}_{1,1}$ is a non-connected ample divisor, which yields a contradiction by the Hodge index theorem.
\end{enumerate}
\end{enumerate}

All these facts imply that 
$\mathcal{D}( B^{\nu})= \mathcal{D}( B)$. Indeed, if $\dim Z>2$, then $\mathcal{D}( B)$ is a simply-connected sphere. Hence, $\nu_*$ has degree one and $\mathcal{D}( B^{\nu})= \mathcal{D}( B)$. Instead, if $\dim Z=2$ and $\nu_*\colon \mathcal{D}( B^{\nu})\to\mathcal{D}( B)$ is a non-trivial covering map, then as before the pull-back of an ample boundary divisor is not connected, which yields a contradiction by the Hodge index theorem (cf. \ref{surfacecaselink}). Clearly, the possibility $\dim Z=1$ never occur: in this case $Z$ would be a rational curve, and there are no quasi-\'{e}tale morphism with target $\PP^1$. 

Finally, observe that $\mathcal{D}( \Diff^*_W(\Delta))= \mathcal{D}( B^{\nu})$. 
Indeed, write $B^{\nu}= \sum_{i \in I} B^{\nu}_i$ and $\Diff^*_W(\Delta)= \sum_{i \in I}\Delta^{\text{vert}}_j|_{W}$. Consider now the correspondence 
\begin{align*}
\text{c.c. } \bigcap_{j \in J} \Delta^{\text{vert}}_j|_{W} & \rightarrow \text{c.c. } \bigcap_{j \in J} \rho(\Delta^{\text{vert}}_j|_{W})\\
\text{c.c. } \bigcap_{j \in J} \rho^{-1} B^{\nu}_{j} & \leftarrow \text{c.c. } \bigcap_{j \in J} B^{\nu}_{j},
\end{align*}
where $c.c.$ stands for connected components, and $J$ runs over all the subsets of $I$, which parametrizes the irreducible components of $\Diff^*_W(\Delta)$, equivalently of $B^{\nu}$. Since any irreducible component of $\Diff^*_W(\Delta)^{=1}$ is the pull-back of a divisor in $B^{\nu}$, and because the morphism $W \to Z^{\nu}$ has connected fibres, this correspondence is a bijection between the strata of $\Diff^*_W(\Delta)$ and those of $B^{\nu}$. 

Combining the output of the last two paragraphs, we conclude that $\pi|_{W}$ induces a PL-homeomorphism between $\mathcal{D}( \Diff^*_W(\Delta))$ and $\mathcal{D}( B)$.
   
\section{Dual complex of logCY pairs on Mori fibre spaces over a surface}\label{dimension2case} In this section we adapt the arguments of \S \ref{Reduction of vertical strata of maximal dimension to vertical divisors} and \S \ref{Vertical strata of maximal dimension are vertical divisors} to describe the dual complex of logCY pairs on Mori fibre spaces whose base has dimension two.
    
    \begin{thm}[$\dim Z=2$]\label{dualcomplexsurface}
      Let $(Y, \Delta)$ be a dlt pair such that:
      \begin{enumerate}[label=(\roman*)]
      \item $Y$ is a $\Q$-factorial projective variety of dimension $n+1$;
      \item (Mori fibre space) $\pi\colon Y\to Z$ is a Mori fibre space of relative dimension $r$ onto a 
      projective surface $Z$;
      \item (logCY) $K_Y+\Delta \sim_\Q 0$.
      \end{enumerate}
      Then $\mathcal{D}( \Delta)$ has one of the following PL-homeomorphism types:
      \begin{enumerate}
      \item $\mathbb{S}^1, \mathbb{S}^{n-2}, \mathbb{S}^{n-1}$ or $\sigma^{m}$ with $m < n$ if $(Y, \Delta)$ has not maximal intersection;
      \item $\sigma^n, \mathbb{S}^n$ or $ \PP^2(\R)* \mathbb{S}^{n-3}$ if $(Y, \Delta)$ has maximal intersection. 
      \end{enumerate}
    \end{thm}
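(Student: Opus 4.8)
The plan is to run the same dichotomy as in Theorem~\ref{DualComplexlogCYMorifibrespace}, the decisive difference being that we no longer have $\rho(Y)=2$ (and hence no control $\rho(Z)=1$) but instead $\dim Z=2$, so that $r=n-1$. Two structural inputs change accordingly. The general fibre pair $(F_{\text{gen}},\Delta|_{F_{\text{gen}}})$ is a $\rho=1$ logCY pair, whose dual complex is by Theorem~\ref{conjsncFanoPic1} a ball $\sigma^{m}$ with $m\le r-1$ or the sphere $\mathbb{S}^{r-1}=\mathbb{S}^{n-2}$; and $(Z,B^{=1})$ is a qdlt logCY pair on a surface, so $\mathcal{D}(B)$ is a one-dimensional complex, namely a point $\sigma^0$, a segment $\sigma^1$, or a circle $\mathbb{S}^1$. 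I would then split according to whether or not every vertical stratum of maximal dimension is a vertical divisor.

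First I would treat the case in which some vertical stratum $W$ of maximal dimension is not a vertical divisor. As remarked in \S\ref{Reduction of vertical strata of maximal dimension to vertical divisors}, that analysis does not use the hypothesis on $\rho(Y)$, so $W$ is a curve ($\dim W=n-r=1$), its image $\pi(W)=B_0$ is a divisor of $B$, and $(Z,B^{=1})$ is qdlt. Substituting the three possibilities for $\mathcal{D}(B)$ into the join computations of Theorems~\ref{KWtrivial} and~\ref{KWnottrivial} and collapsing along the attaching map, I obtain $\sigma^{n-1}$ and $\mathbb{S}^{n-1}$ when $K_W\sim_\Q 0$, and $\sigma^{n-1}$ (from $\mathcal{D}(B)=\sigma^0$), $\sigma^{n}=\sigma^0*\sigma^{r}$ (from $\mathcal{D}(B)=\sigma^1$) and $\mathbb{S}^{n}$ (from $\mathcal{D}(B)=\mathbb{S}^1$) when $K_W$ is non-trivial.

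Next I would treat the case in which every vertical stratum of maximal dimension is a vertical divisor, following \S\ref{Vertical strata of maximal dimension are vertical divisors}. If $(Y,\Delta)$ has combinatorial product type then $\mathcal{D}(\Delta)=\mathcal{D}(B)*\mathcal{D}(\Delta|_{F_{\text{gen}}})$, and taking joins of a $1$-complex with a $\rho=1$ complex yields $\mathbb{S}^1$ (pure vertical), $\mathbb{S}^{n-2}$ (pure horizontal), the balls $\sigma^m$, and the maximal cases $\sigma^{n}=\sigma^1*\sigma^{n-2}$ and $\mathbb{S}^{n}=\mathbb{S}^1*\mathbb{S}^{n-2}$. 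If product type fails, there is a unique codimension-$r$ horizontal stratum $W$ mapping two-to-one to $Z$, and I would rerun \S\ref{boundary B empty}--\S\ref{boundary B sphere}: the empty-boundary and divisorial-branch subcases reproduce $\sigma^{n-2}$, $\mathbb{S}^{n-1}$ and $\mathbb{S}^{n}$ exactly as before.

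The crux is the subcase of \S\ref{boundary B sphere}, where $\pi|_W$ ramifies only in codimension $\ge 2$ and $\mathcal{D}(B)=\mathbb{S}^1$. In the Picard-two setting the induced double cover $\nu_*\colon\mathcal{D}(B^{\nu})\to\mathcal{D}(B)$ was forced to be trivial, because $\rho(Z)=1$ made a boundary curve $B_1$ on a surface lc centre ample, so that a disconnected pullback contradicted the Hodge index theorem. With only $\dim Z=2$ this ampleness, and hence the obstruction, disappears, so the connected double cover $\mathbb{S}^1\to\mathbb{S}^1$ survives; this is precisely the phenomenon responsible for the new PL-type, and checking that it genuinely occurs is the hard part. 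The remaining work is to identify the quotient: writing the non-simplicial fibre complex as a suspension $\mathcal{D}(\Delta|_{F_{\text{gen}}})=\mathbb{S}^{n-3}*\mathbb{S}^0$, the product-type answer would be $\mathbb{S}^1*\mathbb{S}^{n-3}*\mathbb{S}^0=\mathbb{S}^{n-3}*\mathbb{S}^2$, and the involution acts antipodally on the factor $\mathbb{S}^2=\mathbb{S}^1*\mathbb{S}^0$ (antipodally on $\mathbb{S}^1$ via the cover and swapping the two sheets of $\mathbb{S}^0$) while fixing $\mathbb{S}^{n-3}$; since the action is trivial on the first join factor, this gives $\mathcal{D}(\Delta)\simeq\mathbb{S}^{n-3}*(\mathbb{S}^2/{\pm})=\PP^2(\R)*\mathbb{S}^{n-3}$. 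Finally I would sort all outputs by the dimension of their top cell, equivalently by the presence of a $0$-dimensional lc centre, splitting them into the non-maximal list $\{\mathbb{S}^1,\mathbb{S}^{n-2},\mathbb{S}^{n-1},\sigma^{m}\ (m<n)\}$ and the maximal list $\{\sigma^{n},\mathbb{S}^{n},\PP^2(\R)*\mathbb{S}^{n-3}\}$.
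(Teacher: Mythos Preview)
Your overall strategy matches the paper's, and your identification of the new PL-type $\PP^2(\R)*\mathbb{S}^{n-3}$ via the antipodal action on $\mathbb{S}^1*\mathbb{S}^0$ fixing $\mathbb{S}^{n-3}$ is exactly the paper's endgame (the paper makes this rigorous by base-changing along the Stein factorisation $\nu\colon Z^\nu\to Z$ of $\pi|_W$ and quotienting the product-type complex of $(Y^\nu,\nu'^*\Delta)$ by the covering involution $\tau_*$).

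There is, however, a genuine gap in the first branch of your dichotomy. When some vertical stratum $W$ of maximal dimension is not a vertical divisor, you plug the possibilities for $\mathcal{D}(B)$ into the join formula of Theorem~\ref{KWnottrivial}. But that formula rests on the claim that $(Y\setminus W,\Delta|_{Y\setminus W})$ has combinatorial product type, which in turn needs $W$ to be the \emph{unique} such stratum. In the proof of Theorem~\ref{KWnottrivial} this uniqueness is supplied by \S\ref{Reduction of vertical strata of maximal dimension to vertical divisors}\ref{propertynumberofverticallogcentre}: if $\rho(Z)=1$ and $\dim Z>1$, the images $\pi(W),\pi(W')$ are ample divisors and must meet, contradicting Koll\'ar--Shokurov connectedness. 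With only $\dim Z=2$ and $\rho(Z)$ arbitrary, curves on $Z$ need not be ample, so two disjoint strata $W_1,W_2$ of this kind can coexist; removing only one of them does not leave a pair of combinatorial product type, and the decomposition you are invoking is unavailable. (This is also why your list of possible $\mathcal{D}(B)$ misses $\mathbb{S}^0$: when $B^{=1}=\pi(W_1)\sqcup\pi(W_2)$ and nothing else, $\mathcal{D}(B)$ is two disjoint points.) The paper therefore treats the two-stratum case by a separate explicit computation: since $\pi(W_1)\cap\pi(W_2)=\emptyset$ both attaching maps $\alpha_{W_i}$ are embeddings, and one gets $\mathcal{D}(\Delta)\simeq\mathbb{S}^n,\ \sigma^n,\ \mathbb{S}^{n-1}$ according as $\mathcal{D}(B)\simeq\mathbb{S}^1,\ \sigma^1,\ \mathbb{S}^0$. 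These outputs already appear in your final lists, so your classification is not wrong, but the argument as written does not justify them; you need to insert this extra case to close the gap.
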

    \begin{proof}
    We distinguish two cases: either any maximal vertical stratum is a vertical divisor or not.
    
    \textbf{Step 1.} First, suppose that there exists a vertical stratum of codimension $n$ not contained in any vertical divisor of $\Delta$. The properties \ref{Reduction of vertical strata of maximal dimension to vertical divisors}.\ref{defW}-\ref{ZB=1qdltpair} continue to hold. Let $W^+$ be a horizontal stratum of dimension two. Then $\mathcal{D}(B^{=1})=\mathcal{D}(\Diff^*_{W^+}(\Delta))$ by \ref{Reduction of vertical strata of maximal dimension to vertical divisors}.\ref{ZB=1qdltpair}. In particular, $\mathcal{D}(B^{=1})$ is the dual complex of a logCY surface with at least one vertex, so it is a point, a union of two points, a segment or a circle; see for instance \cite{KollarXu2016}. Finally, Theorem \ref{generalizationthmW} implies that $\mathcal{D}(\Delta)$ is PL-homeomorphic to $\sigma^{n-1}$, $\mathbb{S}^{n-1}$, $\sigma^{n}$ or $\mathbb{S}^{n}$, respectively.

    \textbf{Step 2.} Suppose now that all the vertical strata are vertical divisors. 
    The only places in \S \ref{Vertical strata of maximal dimension are vertical divisors} where we have exploited the hypothesis of $\rho(Z)=1$ is to prescribe the homeomorphism type of the dual complex $\mathcal{D}^{\text{vert}}=:\mathcal{D}( B^{=1})$ and to exclude the case detailed in \ref{boundary B sphere}. 
    Under the hypothesis $\dim Z =2$, we can prescribe again the homeomorphism type of $\mathcal{D}^{\text{vert}}$, but we cannot avoid the latter issue anymore. 
    
    We first show that $\mathcal{D}^{\text{vert}}$ is empty, a point, a union of two points, a segment or a circle. Equivalently, $\mathcal{D}^{\text{vert}}$ is a manifold, eventually with boundary, of dimension at most one, either connected or union of two points. This would follow immediately from \cite[Proposition 4.37]{Kollar2013a} if $\mathcal{D}^{\text{vert}}$ were the dual complex of a logCY pair. It is indeed the case if there are no horizontal divisors in $\Delta$, so that $\mathcal{D}^{\text{vert}}=\mathcal{D}(\Delta)$. Otherwise, consider a minimal horizontal stratum of $\Delta$, denoted $W$. 
    Note that $\Diff^*_{W}(\Delta)^{=1} = \pi|_{W}^* B^{=1}$, i.e. the components of $\Diff^*_{W}(\Delta)^{=1}$ are cut by vertical divisors by the minimality of $W$. Hence, any component of $B^{=1}$ can intersect at most two other components, since the same occurs for the logCY pair $(W, \Diff^*_{W}(\Delta))$ by \cite[Proposition 4.37]{Kollar2013a}. For a graph like $\mathcal{D}^{\text{vert}}$, this implies that $\mathcal{D}^{\text{vert}}$ is a manifold with boundary. A similar argument proves the statement about the connectedness of $\mathcal{D}^{\text{vert}}$, by applying \cite[\S 16]{KollarXu2016} to the pair $(W, \Diff^*_{W}(\Delta))$.
   
   As a result, if $(Y, \Delta)$ has combinatorial product type, the description of $\mathcal{D}(\Delta)$ is identical to that of the case of $\rho(Y)=2$; see first paragraphs of \S \ref{Vertical strata of maximal dimension are vertical divisors}. 
    
    \textbf{Step 3.} Suppose now that $(Y, \Delta)$ has not combinatorial product type. As in \S \ref{Vertical strata of maximal dimension are vertical divisors}, there exists exactly one horizontal stratum $W$ of codimension $n$, and  this stratum maps generically two-to-one to $Z$ via $\pi$.
    Without loss of generality, we can assume $B^{=1} \neq 0$; otherwise the same argument of \S \ref{boundary B empty} applies. Since the components of $\Diff^*_{W}(\Delta)^{=1}$ are restrictions of vertical divisors, and the pair $(W, \Diff^*_{W}(\Delta))$ is dlt, then the map $\pi|_W$ is finite at the 0-dimensional strata of $(W, \Diff^*_{W}(\Delta))$. In particular, the image of any stratum is a lc centre of $B^{=1}$ of the same dimension. Therefore, there exists a surjective PL-morphism 
    \[
    \pi_*:\mathcal{D}(\Diff^*_{W}(\Delta)) \twoheadrightarrow  \mathcal{D}(B)=\mathcal{D}^{\text{vert}}
    \]
    whose fibres have cardinality at most two. If $\pi_*$ is a homeomorphism, then $(Y, \Delta)$ has combinatorial product type. So by assumption, there should exist at least a point in $\mathcal{D}^{\text{vert}}$ whose fibre via $\pi_*$ consists of two points. Note that an edge of $\mathcal{D}(\Diff^*_{W}(\Delta))$ cannot be contained in the ramification locus of $\pi$; 
    otherwise, there would exist an edge in the ramification locus adjacent to an edge which does not belong to it, but then the link of the common vertex would have at least three points, which is a contradiction. For the same reason, a vertex is in the branch locus only if it is a boundary of $\mathcal{D}(B)$. Because of Step 2 and the previous remarks, $\pi_*$ is one of the following map (up to isotopy):
    \begin{enumerate}
    \item $\mathbb{S}^0 \to \sigma^0$;
    \item $\sigma^1 \simeq [-1,1] \to \sigma^1 \simeq [0,1]$, $x \mapsto |x|$;
     \item $\mathbb{S}^1 \to \sigma^1 \simeq [-1,1]$, $ z \mapsto \Re(z)$ with $z \in \mathbb{S}^1 \subset \C$;
    \item $\mathbb{S}^1 \to \mathbb{S}^1$, $z \mapsto z^2$.
    \end{enumerate}
    
    In case (1) and (3), the PL-homeomorphic type of $\mathcal{D}(\Delta)$ can be described as in \S \ref{boundary B simplex}; the triangulation may be finer (for instance if $\mathcal{D}(B)$ is a segment with more than two vertices), but the description of $\mathcal{D}(\Delta)$ is identical. 
    
    Case (2) is similar, but for completeness we describe its triangulation. Denote by $B_0$ the components of $B^{=1}$ corresponding to $\{0\}=\sigma^1(0) \in [0,1] \simeq \sigma^1$. Away from the ramification (more precisely, over the formal completion of $Z$ along $B - B_0$), the pair $(Y, \Delta)$ has combinatorial product type and its dual complex is \[\mathcal{D}^{\text{vert}}*\mathcal{D}(\Delta_{F_{\text{gen}}}) =\sigma ^{1}*\mathcal{D}( \Delta_{F_{\text{gen}}})=\sigma^{1}* (\sigma^{n-2}_{(1)}\cup_{\partial \sigma^{n-2}}\sigma^{n-2}_{(2)}).\]
    The presence of the ramification suggest that the cell associated to $W$ and $\pi^*|_W B_0$ are counted twice in this join. In simplicial term, there exists a surjective PL-morphism $\mathcal{D}^{\text{vert}}*\mathcal{D}(\Delta_{F_{\text{gen}}}) \to \mathcal{D}(\Delta)$ which induces the following identifications:
    \[
    \emptyset * \sigma^{n-2}_{(1)} \sim \emptyset * \sigma^{n-2}_{(2)} \qquad  \sigma^1(0)* \sigma^{n-2}_{(1)} \sim \sigma^1(0)* \sigma^{n-2}_{(2)}.
    \] 
    Note that
    \[
    \partial (\sigma^1 * \sigma^{n-2})= \partial \sigma^1 * \sigma^{n-2} \cup \sigma^1 * \partial \sigma^{n-2} =
    (\sigma^1(0)* \sigma^{n-2} \cup \sigma^1 * \partial \sigma^{n-2}) \cup \sigma^1(1)* \sigma^{n-2},
    \]
    and we have
    \begin{align*}
    \mathcal{D}( \Delta) & = \sigma^{1}* (\sigma^{n-2}_{(1)}\cup_{\partial \sigma^{n-2}}\sigma^{n-2}_{(2)})/ \sim\\
    & = (\sigma^{1}* \sigma^{n-2}_{(1)})\cup_{\sigma^1(0)* \sigma^{n-2} \cup \sigma^1 * \partial \sigma^{n-2}}(\sigma^{1}* \sigma^{n-2}_{(2)})\\
    & = (\sigma^{1}* \sigma^{n-2}_{(1)})\cup_{\partial (\sigma^{1}*\sigma^{n-1}) \setminus \sigma^1(1)* \sigma^{n-2}}(\sigma^{1}* \sigma^{n-2}_{(2)})\simeq \mathbb{S}^{n}\setminus \sigma^{n-1} \simeq \sigma^n.
    \end{align*}

    \textbf{Step 4.} Consider now case (4). It means that there exists exactly one minimal horizontal stratum $W$ such that the map $\pi_*:\mathcal{D}(\Diff^*_{W}(\Delta))\simeq \mathbb{S}^1 \twoheadrightarrow  \mathcal{D}(B)\simeq \mathbb{S}^1$ is a topological covering of degree two. 
    
    We claim that the assumption forces the ramification of $\pi|_W$ to have codimension $\geq 2$. If not, we can write $K_W = \pi^*(K_Z + \frac{1}{2}\Br(\pi|_W))$. By Remark \ref{boundary<1Pic1}, the homeomorphism $\mathcal{D}(\Diff^*_{W}(\Delta))\simeq \mathbb{S}^1$ implies 
    $\Diff^*_{W}(\Delta)^{=1}=\Diff^*_{W}(\Delta) \sim_{\QQ} - K_W$. So, since $\Diff^*_{W}(\Delta)^{=1}=\pi|_W^*B^{=1}$, we conclude that $K_Z+B^{=1}+ \frac{1}{2}\Br(\pi|_W)$ is numerically trivial, and $\Q$-linearly trivial, as $Z$ is a $\Q$-factorial rational surface with rational singularities; see \cite[Proposition 3.36]{KollarMori1998}, \cite[\S 18]{KollarXu2016},  and \cite[Proposition 5.5]{HogadiXu009}. Therefore, as in Remark \ref{boundary<1Pic1}, we have   
   \begin{align*}
    h_{1}(\mathcal{D}( B^{=1}), \C) & =h^1(Z, \OO_{B^{=1}})=h^{2}(Z, \OO(-B^{=1}))\\
    & =h^{0}(Z, K_Z+B^{=1})= h^0(Z, - \frac{1}{2}\Br(\pi|_W))=0.
   \end{align*}
    The first equality follows from the fact that $B^{=1}$ is a collection of rational curves meeting transversely. The second is a consequence of the vanishing $H^i(Z, \mathcal{O}_Z)=0$ for $i>0$,  due to the fact that $Z$ is a rational surface with rational singularities. Finally, the third equality is Serre's duality. All together these equalities yield a contradiction, since $h_{1}(\mathcal{D}( B^{=1}), \C)=h_{1}(\mathbb{S}^1, \C)=1$.
     
    Hence, we can reduce to the case that the ramification locus of $\pi|_W$ has codimension $\geq 2$. Consider the Stein factorization of the restriction morphism $\pi|_W$ and the cartesian diagrams given respectively by the pairs of morphism $(\pi|_W, \nu)$ and $(\pi, \nu)$:
    \[
     \xymatrix{
     (W^{\nu}:=W \times_Z Z^{\nu}, \nu^{\prime\, *}_{W^{\nu}}\Diff^*_W(\Delta)=\pi^{\prime\,} |_W^*(B^{\nu})) \ar@{->}[d]_{\pi'|_{W^{\nu}}} \ar@{->}[r]^{\qquad \qquad \qquad \quad\nu'_{W^{\nu}}}& (W, \Diff^*_W(\Delta)) \ar@{->}[ld]_{\varrho} \ar@{->}[d]^{\pi|_W}   \\
      (Z^{\nu}, B^{\nu}:=\nu^*(B^{=1})) \ar@{->}[r]^{\qquad \nu}_{ \qquad 2:1}&(Z,  B^{=1}).}
    \]
    \[
    \xymatrix{
    Y^{\nu}:=Y \times_Z Z^{\nu} \ar@{->}[d]_{\pi'} \ar@{->}[r]^{\qquad \quad \nu'} & Y \ar@{->}[d]^{\pi}\\
    Z^{\nu} \ar@{->}[r]_{\nu} & Z.
    }
    \]
  Note that the equality $\Diff^*_{W}(\Delta)= \Diff^*_{W}(\Delta)^{=1}=\pi|_W^*B^{=1}$ implies $\nu^{\prime\, *}_{W^{\nu}}\Diff^*_W(\Delta)=\pi^{\prime\,} |_W^*(B^{\nu})$. Further, since $\nu$ is quasi-\'{e}tale, the log pair $(Y^{\nu}, \nu^{\prime, \, *}\Delta)$ 
  is logCY, but in general it fails to be dlt over the ramification locus of $\nu$. One can check it for instance from the computation for $W^{\nu}$ below. As remarked in Step 2 and since the PL-map
    $\nu_*\colon \mathcal{D}( B^{\nu}) \to \mathcal{D}( B)$
    is a topological covering of degree at most two, no 0-dimensional stratum of $(Z, B)$ is contained in the branch locus of $\nu$, denoted $\Br(\nu)$. In particular, up to shrinking $Z$, namely removing $\Br(\nu)$, we can suppose that $\nu$ is \'{e}tale. In fact, the dual complexes are unchanged:
    \begin{align*}
    \mathcal{D}(B|_{Z\setminus \Br(\nu)}) & = \mathcal{D}( B);\\
    \mathcal{D}(B^{\nu}|_{Z^{\nu}\setminus \nu^{-1}\Br(\nu)}) & = \mathcal{D}( B^{\nu});\\
    \mathcal{D}( \Diff^*(\Delta)|_{W\setminus  \pi^{\prime \, -1}(\Br(\nu))}) & = \mathcal{D}( \Diff^*(\Delta)).
    \end{align*}
    The advantage is that now $(Y^{\nu}, \nu^{\prime \, *}\Delta)$ is dlt, as pullback of a dlt pair via the \'{e}tale morphism $\nu'$, and the dual complex $\mathcal{D}( \nu^{\prime \, *}\Delta)$ is well-defined. 
    
    Denote by $\tau$ the involution of the covering map $\nu'$. Observe that over the generic point of $Z^{\nu}$
    \begin{align*}
    W^{\nu} & =\{(w,z)\in W \times Z^{\nu}|\, \pi(w)=\nu(\varrho(z))=\nu(z)\}\\
    & =\{(w,\varrho(w))|\, w\in W\} \sqcup \{(w,\tau \varrho(w))|\, w\in W\},
    \end{align*} 
    \emph{i.e.} $ W^{\nu}$ is the union of two strata of  $\nu^{\prime \, *}\Delta$ in $Y^{\nu}$, 
    exchanged by the involution $\tau$. Now, all the maximal vertical strata of $(Y^{\nu}, \nu^{\prime \, *}\Delta)$ are divisors, and the horizontal strata of codimension $r$ maps generically injective onto the base:  $\mathcal{D}(\nu^{\prime \, *}\Delta)$ has combinatorial product type. In particular, 
    we have
    \[\mathcal{D}(\nu^{\prime \, *}\Delta)=\mathcal{D}^{\text{vert}}*\mathcal{D}^{\text{hor}}\simeq \mathbb{S}^1 * \mathbb{S}^{n-1}.\]  
    The involution $\tau$ descends to a PL-involution $\tau_*$ of $\mathcal{D}(Y^{\nu}, \nu^{\prime \, *}\Delta)$: $\tau_*$  fixes the horizontal strata of codimension $\leq n$, which form a subcomplex isomorphic to $\mathbb{S}^{n-3}$. We conclude that
   \begin{align*}
    \mathcal{D}( \Delta)& = \mathcal{D}^{\text{vert}}*\mathcal{D}^{\text{hor}}/\tau_*\simeq \mathbb{S}^1 * \mathbb{S}^{n-1}/\tau_*\\
    & = \mathbb{S}^1*\mathbb{S}^0*\mathbb{S}^{n-3}/\tau_* \simeq \mathbb{S}^2*\mathbb{S}^{n-3}/\tau_* \simeq \PP^2(\R)*\mathbb{S}^{n-3}.
   \end{align*} 
    \end{proof}
    
    \begin{cor}\label{cohomological charcterization dual complex}
    In the hypothesis of Theorem \ref{dualcomplexsurface}, $\mathcal{D}( \Delta) \simeq \mathbb{S}^n$ if and only if $(Y, \Delta)$ has maximal intersection, $\Delta=\Delta^{=1}$ and $K_Y + \Delta$ is Cartier.
    \end{cor}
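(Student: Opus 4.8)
The plan is to combine the classification of PL-homeomorphism types in Theorem \ref{dualcomplexsurface} with the cohomological criterion of Remark \ref{boundary<1Pic1}, the only extra geometric input being that maximal intersection forces the relevant Picard group to be torsion-free. Throughout I use that $(Y,\Delta)$ has maximal intersection if and only if $\mathcal{D}(\Delta)$ has dimension $n$, since a $0$-dimensional lc centre has codimension $n+1$ and corresponds to an $n$-face $v_W$.

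First I would settle the forward implication. If $\mathcal{D}(\Delta) \simeq \mathbb{S}^n$, then $\mathcal{D}(\Delta)$ has dimension $n$, so $(Y,\Delta)$ has maximal intersection; here one uses that $\dim Z = 2 < \dim Y$ forces $n \geq 2$, whence the non-maximal-intersection types $\mathbb{S}^1,\mathbb{S}^{n-2},\mathbb{S}^{n-1},\sigma^m$ ($m<n$) of Theorem \ref{dualcomplexsurface} all have dimension strictly less than $n$ and cannot be PL-homeomorphic to $\mathbb{S}^n$. Since $h_n(\mathbb{S}^n,\C)=1$, Remark \ref{boundary<1Pic1} then yields $\Delta=\Delta^{=1}$ and $K_Y+\Delta\sim 0$; in particular $K_Y+\Delta$ is Cartier, giving all three asserted conditions.

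For the converse, suppose $(Y,\Delta)$ has maximal intersection, $\Delta=\Delta^{=1}$, and $K_Y+\Delta$ is Cartier. By Theorem \ref{dualcomplexsurface} the maximal-intersection hypothesis restricts $\mathcal{D}(\Delta)$ to one of $\sigma^n$, $\mathbb{S}^n$, or $\PP^2(\R)*\mathbb{S}^{n-3}$. I would compute the top homology of each over $\C$: the simplex $\sigma^n$ is contractible, and since $\PP^2(\R)$ is $\C$-acyclic the join $\PP^2(\R)*\mathbb{S}^{n-3}$ is $\C$-acyclic as well, so $h_n=0$ in both cases, whereas $h_n(\mathbb{S}^n,\C)=1$. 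Hence it suffices to prove $h_n(\mathcal{D}(\Delta),\C)=1$, which by Remark \ref{boundary<1Pic1} is equivalent to $K_Y+\Delta\sim 0$. This is where the Cartier hypothesis is used: since $(Y,\Delta)$ has maximal intersection, $Y$ is rationally connected, hence admits no nontrivial connected finite \'{e}tale cover and $\Pic(Y)$ is torsion-free; as $K_Y+\Delta$ is Cartier and $\Q$-linearly trivial by the logCY assumption, it is therefore linearly trivial, i.e. $K_Y+\Delta\sim 0$, forcing $\mathcal{D}(\Delta)\simeq\mathbb{S}^n$.

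The main obstacle is precisely this upgrade from ``Cartier'' to ``$\sim 0$'': were $\Pic(Y)$ to contain a nontrivial torsion class represented by $K_Y+\Delta$, then an effective divisor $\Q$-linearly equivalent to $0$ would have to vanish, so $h^0(Y,\OO_Y(K_Y+\Delta))=0$ and hence $h_n=0$, breaking the equivalence. I would therefore isolate as the key lemma the statement that a rationally connected klt variety has torsion-free Picard group (equivalently, that $h^0$ of a $\Q$-trivial Cartier divisor equals $1$ only when the divisor is linearly trivial); the remaining steps are the dimension count and the two homology computations recorded above.
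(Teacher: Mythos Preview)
Your argument is correct and matches the paper's approach. The only difference is that the paper leaves implicit the step you isolate as the key lemma---it simply writes $h_n(\mathcal{D}(\Delta),\C)=h^0(Y,K_Y+\Delta^{=1})\neq 0$ without spelling out why Cartier plus $\Q$-trivial forces linear triviality, whereas you justify this via rational connectedness and torsion-freeness of $\Pic(Y)$.
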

    \begin{proof}
        The conditions are necessary. The argument in Remark \ref{boundary<1Pic1} implies that if $\mathcal{D}( \Delta) \simeq \mathbb{S}^n$, then $\Delta=\Delta^{=1}$ and $K_Y + \Delta \sim \mathcal{O}_Y$, thus Cartier.  
              
        The conditions are also sufficient. Indeed, by Theorem \ref{dualcomplexsurface} the maximality implies that $\mathcal{D}( \Delta)$ is isomorphic to $\sigma^n$ or $\PP^2(\R)* \mathbb{S}^{n-3}$ or $\mathbb{S}^{n}$. However, as in Remark \ref{boundary<1Pic1}, we have \[h_{n}(\mathcal{D}( \Delta), \C)=h^{0}(Y, K_Y+\Delta^{=1})\neq 0,\]
    which excludes the first two cases. 

    \end{proof}
    \begin{rmk}
    Without assuming that $Y$ carries a structure of Mori fibre space over a surface (or a curve), the additional hypotheses of Corollary \ref{cohomological charcterization dual complex}, namely $\Delta=\Delta^{=1}$, $(Y, \Delta)$ has maximal intersection and $K_Y + \Delta$ is Cartier, do not imply that $\mathcal{D}( \Delta)\simeq \mathbb{S}^n$. Indeed, we cannot exclude the occurrence of finite quotient of spheres. See also \cite[\S 33, n=4]{KollarXu2016}.
    
    We describe an example of this phenomenon. Let $Y':=\PP^1\times \PP^1\times \PP^1\times \PP^1$ and $\tau$ the involution which swaps the homogeneous coordinates $[x:y]\to[y:x]$ on each factor. Let $(Y, \Delta)$ be the dlt logCY pair where $Y:=Y'/\tau$, and $\Delta$ is the pushforward of the toric boundary of $Y'$. The dual complex $\mathcal{D}( \Delta)$ is isomorphic to $\PP^3(\R)$; see also \ref{examplePR2Sn-3} for detailed computations. However, $\Delta=\Delta^{=1}$ and $(Y, \Delta)$ has maximal intersection by construction and also $K_Y+\Delta$ is Cartier. Indeed, notice that the singularities of $Y$ are quotient singularities of type $\frac{1}{2}(1,1,1,1)$, thus cones over the projective space $\PP^3$ polarized with the line bundle $\mathcal{O}_{\PP^3}(2)$. Since the canonical class of $\PP^3$ is a multiple of the polarization, the singularities are Gorenstein; see \cite[Propostion 3.14]{Kollar2013a}. Together with the fact that $\Delta$ is supported on the smooth locus of $Y$, we conclude that $K_Y+\Delta$ is Cartier.
    \end{rmk}
\begin{exam}\label{examplePR2Sn-3}
We construct examples of dlt logCY pairs $(Y, \Delta)$ in any dimension with the following properties:
\begin{enumerate}
      \item $Y$ is a $\Q$-factorial projective variety of dimension $n+1$ with $\rho(Y)=3$;
      \item $\pi\colon Y\to Z$ is a morphism with $\rho(Y/\PP^2)=1$ and $\dim Z=2$;
      \item $\mathcal{D}( \Delta)\simeq \PP^2(\R)*\mathbb{S}^{n-3}$.
      \end{enumerate}
The following construction generalizes \cite[Example 60]{KollarXu2016}. Consider the logCY pair $(Y', \Delta')$ defined by 
\begin{align*}
Y'& :=\PP^1_{[x_0:x_1]} \times \PP^1_{[y_0:y_1]}\times \PP^{n-1}_{[z_0:\ldots:z_{n-1}]}\\
\Delta_x & := \lbrace  x_0 x_1=0 \rbrace \subseteq \PP^1_{[x_0:x_1]}\\
\Delta_y & := \lbrace  y_0 y_1=0 \rbrace \subseteq \PP^1_{[y_0:y_1]}\\
\Delta_z & := \left\lbrace (z_0z_1+z_2^2+\ldots + z^2_{n-1})\prod^{n-1}_{i=2}z_i=0 \right\rbrace \subseteq \PP^{n-1}_{[z_0:\ldots:z_{n-1}]}\\
\Delta'& := \Delta_x \boxtimes \Delta_y \boxtimes \Delta_z = \left\lbrace x_0 x_1 y_0 y_1 (z_0z_1+z_2^2+\ldots + z^2_{n-1})\prod^{n-1}_{i=2}z_i=0\right\rbrace.
\end{align*}
The following facts hold.
\begin{enumerate}
\item The projection \[\pi'\colon Y' \to Z':=\PP^1 \times \PP^1\]  onto the first two factors of $Y'$ is a Mori fibre space.
\item The involution 
\begin{align*}
\tau\colon \PP^1 \times \PP^1\times \PP^{n-1} & \rightarrow \PP^1 \times \PP^1\times \PP^{n-1}\\
([x_0:x_1], [y_0:y_1], [z_0:z_1: \ldots: z_{n-1}]) & \mapsto ([x_1:x_0], [y_1:y_0], [z_1:z_0: \ldots: z_{n-1}])
\end{align*} 
preserves the boundary $\Delta'$ and descends to an involution onto $Z'$.
\item \label{diagramquotient} By construction, the following diagram commutes:
\[
 \xymatrix{
 (Y', \Delta') \ar@{->}[d]_{\pi'} \ar@{->}[r]^{q_Y \qquad \quad } & (Y, \Delta):=(Y'/\tau, q_{Y, *}\Delta') \ar@{->}[d]^{\pi}\\
Z' \ar@{->}[r]^{q_Z} & Z:=Z'/\tau.
 }
\] 
\item \label{quasietalequotientmap} The quotient map $q_Z$ and $q_Y$ are quasi-\'{e}tale. In particular, the two morphisms are \'{e}tale along $\Delta_x \boxtimes \Delta_y$ and its preimage via $\pi'$ respectively.
\end{enumerate} 
The pair $(Y, \Delta)$ is dlt logCY by (\ref{quasietalequotientmap}) and the morphism $\pi\colon Y \to Z$ is a Mori fibre space by the commutativity of the diagram (\ref{diagramquotient}).

 The dual complex $\mathcal{D}( \Delta')$ has combinatorial product type   
\begin{align*}
\mathcal{D}( \Delta')& = \mathcal{D}(\Delta_x) * \mathcal{D}(\Delta_y) * \mathcal{D}(\Delta_z)\\
& = \mathbb{S}^0 * \mathbb{S}^0 * \left( \sigma^{n-2}\cup_{\partial \sigma^{n-2}}\sigma^{n-2}\right)\\
& \simeq_{\text{PL}} \mathbb{S}^0 * \mathbb{S}^0 *\mathbb{S}^0 * \partial \sigma^{n-2}\\
& = \mathbb{S}^2 * \partial \sigma^{n-2}.
\end{align*}
Since the involution $\tau$ preserves $\Delta'$, it defines a PL-involution of the dual complex $\mathcal{D}( \Delta')$. It acts as the antipodal map on  $\mathbb{S}^2$ and it fixes $\partial \sigma^{n-2}$. We conclude that
\[\mathcal{D}( \Delta)\simeq \PP^2(\R)*\mathbb{S}^{n-3}.\]
\end{exam}
   \printbibliography
   \Addresses
\end{document}